\numberwithin{equation}{section}
\theoremstyle{plain}
\def \bx{\mathbf{x}}
\def \bX{\mathbf{X}}
\def \be{\begin{equs}}
	\def \ee{\end{equs}}
\def \I{\mathbf{1}}
\def \E{\mathbb{E}}
\def \P{\mathbb{P}}
\def \bmu{\mathbf{\mu}}
\def \R{\mathbb{R}}
\def \Z {\mathbb Z}
\def \bQ{\mathbf{Q}}
\def \bmu{\pmb{\mu}}
\def \R{\mathbb{R}}
\def\G {\mathbb{G}}
\begin{document}
	\begin{frontmatter}
		\title{On Testing for Parameters in Ising Models}

		\begin{aug}
			
			\author{\fnms{Rajarshi} \snm{Mukherjee}\thanksref{m1}\ead[label=e2]{rmukherj@hsph.harvard.edu}}
			\and
			\author{\fnms{Gourab} \snm{Ray}\thanksref{m2}\ead[label=e3]{gourab1987@gmail.com}}

			\affiliation{Harvard University\thanksmark{m1}  and University of Victoria \thanksmark{m2} }
			
			\address{Department of Biostatistics,\\
				655 Huntington Avenue, Boston, MA- 02138. \\
				\printead{e2}
			}

			\address{ University of Victoria\\
				Mathematics and Statistics, DTB A425.\\
				\printead{e3}}

			
		\end{aug}
	
	\begin{abstract} 
		We consider testing for the parameters of Ferromagnetic Ising models. While testing for the presence of possibly sparse magnetizations, we provide a general lower bound of minimax separation rates which yields sharp results in high temperature regimes. Our matching upper bounds are adaptive over both underlying dependence graph and temperature parameter. Moreover our results include the nearest neighbor model on lattices, the sparse  Erd\"{o}s-R\'{e}nyi random graphs, and regular rooted trees -- right up to the critical parameter in the high temperature regime. We also provide parallel results for the entire low temperature regime in nearest neighbor model on lattices -- however in the plus boundary pure phase. Our results for the nearest neighbor model crucially depends on finite volume analogues of correlation decay property for both high and low temperature regimes -- the derivation of which borrows crucial ideas from FK-percolation theory and might be of independent interest. Finally, we also derive lower bounds for estimation and testing rates in two parameter Ising models -- which turn out to be optimal according to several recent results in this area. 
	\end{abstract}
	\end{frontmatter}
	

	\section{Introduction} 
	Inference of parameters in a dependent system of observations, especially of parameters in a Markov Random Field (MRF), has received a lot of attention in recent times. The main activities in this regard can be broadly classified in two categories: (i) infer the strength of dependence and individual significance of vertices given a \textit{single} sample from the MRF \citep{bhattacharya2015inference,ghosal2018joint,daskalakis2018testing,chatterjee2007estimation}, and (i) infer underlying graphical/dependence sructure among individuals given \textit{multiple} i.i.d. samples from the MRF \citep{bresler2015efficiently,anandkumar2011high,wu2013learning,santhanam2012information}. Here we focus on the first theme of problems and provide some results in terms of limits of hypothesis testing for parameters in Ising Models. 
	
	More precisely, let $\bX=(X_1,\ldots,X_n)^\top\in \{\pm 1\}^n$ be a random vector with the joint distribution of $\bX$ given by an Ising model defined as:
	\be
	\P_{\beta, \bQ,\bmu}(\bX=\bx):=\frac{1}{Z(\mathbf{Q}, \mathbf{\bmu})}\exp{\left(\frac{\beta}{2}\bx^\top\mathbf{Q} \bx+\bmu^\top\bx\right)},\qquad \forall \bx \in \{\pm 1\}^n,
	\label{eqn:general_ising}
	\ee
	where $\mathbf{Q}$ is an $n \times n$ symmetric and hollow matrix, $\bmu:=(\mu_1,\ldots,\mu_n)^\top\in \mathbb{R}^{n}$ is an unknown parameter vector to be referred to as the external magnetization vector, $\beta\in \mathbb{R}$ is a real number usually referred to as the ``inverse temperature", and $Z(\beta,\mathbf{Q}, \mathbf{\bmu})$ is a normalizing constant.  
	It is clear that the pair $(\beta,\mathbf{Q})$ characterizes the dependence among the coordinates of $\bX$, and $X_i$'s are independent if $\beta\mathbf{Q}=\mathbf{0}_{n\times n}$.  The matrix $\mathbf{Q}$ will usually be associated with a certain sequence of simple labeled graphs $\mathbb{G}_n=(V_n,E_n)$ with vertex set $V_n=\{1,\dots,n\}$ and edge set $E_n \subseteq V_n\times V_n$ and corresponding $\mathbf{Q}=|V_n| G_n/2|E_n|$, where $G_n$ is the adjacency matrix of $\mathbb{G}_n$. 
	Under model (\ref{eqn:general_ising}) we will in most parts of the paper be interested in testing 
	\be
	H_0: \bmu=\mathbf{0} \quad {\rm vs} \quad H_1: \bmu \in \Xi(s,A), \label{eqn:hypo_sparse}
	\ee
	where
	$${\Xi}(s,A):=\left\{\bmu\in \R^n:|{\rm supp}(\bmu)|= s,{\rm \ and\ } \min_{i\in {\rm supp}(\bmu)}\mu_i\geq A>0\right\},$$
	and
	$$
	{\rm supp}(\bmu):=\{1\le i\le n:\mu_i\ne 0\}.
	$$
In section \ref{section:aux_result} we shall also briefly discuss some results regarding the inference of both $\beta$ and $\bmu$  as well as its connections to some recent results in parameter estimation in Ising models \citep{chatterjee2007estimation,bhattacharya2015inference,ghosal2018joint} .
	
	To this end, we adopt a standard asymptotic minimax framework \citep{burnashev1979minimax, ingster1994minimax, Ingster1,Ingster4}. Let a statistical test for $H_0$ versus $H_1$ be a measurable $\{0,1\}$ valued function of the data $\bX$, with $1$ indicating rejecting the null hypothesis $H_0$ and $0$ otherwise. The worst case risk of a test $T: \{\pm 1\}^n\to \{0,1\}$ is then given by 
	\be
	\mathrm{Risk}(T,{\Xi}(s,A),\bQ)&:=\P_{\beta,\bQ,\mathbf{0}}\left(T(\bX)=1\right)+\sup_{\bmu \in {\Xi}(s,A)}\P_{\beta,\bQ,\bmu}\left(T(\bX)=0\right),\label{eqn:general_hypo_ising}
	\ee
	where $\P_{\beta,\mathbf{Q},\bmu}$ denotes the probability measure as specified by \eqref{eqn:general_ising}. We say that a sequence of tests $T$ corresponding to a the model-problem pair (\ref{eqn:general_ising}) and (\ref{eqn:general_hypo_ising}), to be asymptotically powerful (respectively asymptotically not powerful) against $\Xi(s,A)$ if
	\be
	\label{eqn:powerful}
	\limsup\limits_{n\rightarrow \infty}\mathrm{Risk}(T,{\Xi}(s,A),\bQ)= 0\text{ (respectively }\liminf\limits_{n\rightarrow \infty}\mathrm{Risk}(T,{\Xi}(s,A),\bQ)>0).
	\ee

	The minimum signal strength $A$ required (in sense of asymptotic rate) for guaranteeing the existence of asymptotically powerful tests will be refered to as the fundamental limits/minimax separation rates of the testing problem \eqref{eqn:hypo_sparse}. This problem has been previously considered by \cite{mukherjee2016global} where the authors study the effect of $\bQ=|V_n| G_n/2|E_n|$ for certain sequence of graphs $\mathbb{G}_n=(V_n,E_n)$ on the fundamental limits of the testing problem \eqref{eqn:hypo_sparse}. In particular, they show that 
	
	\begin{enumerate}\itemsep5pt
		\item[$\bullet$] For regular graphs with high enough diverging degree (large than $\sqrt{n}$) and $0\leq \beta<1$, the fundamental limits of testing \eqref{eqn:hypo_sparse} in the Ising model \eqref{eqn:general_ising} is identical (in rate) to the case of $\beta=0$. Moreover, the matching upper bounds crucially depended on the knowledge of both $\beta$ and $\bQ$ and hence is not adaptive over these nuisance parameters.
		
		\item[$\bullet$] The presence of a phase transition in the Ising model \eqref{eqn:general_ising} can reflect itself in the fundamental limits of testing  \eqref{eqn:hypo_sparse}. In particular, for Ising models on the cycle graph, the rate optimal results for all regimes of $\beta$ are the the same as the independent case (i.e. $\beta=0$). On the other hand, for the Curie-Weiss model (where $\bQ$ corresponds to the complete graph), whereas one can detect provably smaller rate optimal signals at the thermodynamic phase transition point  ($\beta=1$), rate optimality results matches the case of $\beta=0$ for any other $\beta>0$. 
		%
		%
		%
	\end{enumerate} 
	In view of the results stated above the following questions remain. 
	\begin{enumerate}\itemsep5pt
		\item [$\bullet$] For high temperature (small but non-vanishing $\beta>0$) in more general Ising models, does the testing problem \eqref{eqn:hypo_sparse} behave similar to $\beta=0$? Moreover, does there exist rate optimal testing procedures in such cases which do not depend on the knowledge of the nuisance parameters $\beta,\bQ$?
		
		\item [$\bullet$] What are the fundamental limits of testing \eqref{eqn:hypo_sparse} in regular graphs of bounded average degree (such as the classical nearest neighbor lattice ising model) for non-critical values of $\beta$ and whether they match the case of $\beta=0$?
		
		\item [$\bullet$] Does critical temperature in general help in detection i.e. is there a testing procedure which can detect lower order signals at a critical $\beta>0$ compared to what information theoretically possible under non-critical $\beta$?
	\end{enumerate}
	
	This paper is motivated in exploring the questions above. To proceed however, new proof techniques are necessary compared to those used in  \cite{mukherjee2016global}. In particular, the technical machinery employed in \cite{mukherjee2016global} is more tailored towards mean-field models. Consequently they are severely inadequate for getting sharp results right up to a critical point in bounded degree and/or sparse graphs, such as lattice Ising models and sparse Erd\"{o}s-R\'{e}nyi random graphs, due to the non-mean field nature of these problems. Finally, it is worth noting that obtaining the desired results for substantially small vanishing values of $\beta$ is  comparatively easier and the main thrust in this paper lies in pushing the answers right up to a critical $\beta$ whenever possible. Considering the motivations and challenges stated above, the main results of this paper can be summarized as below.
	
	\begin{enumerate}
		\item[(I)] We provide general matching upper and lower bounds under ``high temperature" regimes characterized by Dorbushin Type conditions. These yield optimal results up to critical temperature for regular rooted trees, sparse Erd\"{o}s-R\'{e}nyi Graphs, and nearest neighbor lattices.
		
		\item [(II)] In ``high temperature" regimes, our tests are free of both $\beta$ and the underlying graph -- which are nuisance parameters in the problem. Therefore, unlike \cite{mukherjee2016global} the tests are adaptive over these nuisance parameters.
		
		
		\item [(III)] The derivation of rate optimal results in nearest neighbor lattices in high as well as low temperature (in the pure phase) depends on correlation decay between spins in the Ising model. Although a limiting infinite volume version of the result is available \citep{aizenman1987phase,duminil2018exponential,duminil2017sharp}, we provide the finite $n$ versions of these results which require extra care -- especially for low-temperature regime.
		
		\item [(IV)] We provide some statistical physics based heuristics for the behavior of the problem at the critical temperature on nearest neighbor models on lattices. These results provide further intuition that at criticality it might be possible to detect lower signals.
		
		\item [(V)] Our results imply minimax optimality of pseudo-likelihood based estimator of the magnetization for many examples in classical two parameter Ising models \cite{ghosal2018joint}. To provide a more complete picture, we also provide a minimax lower bound for estimation of $\beta$ -- which matches the upper bound provided in \cite{ghosal2018joint} in many cases.
		
	\end{enumerate}
	

	\subsection{Organization} The rest of the paper is organized as follows. In Section \ref{section:general_graphs} we provide a general result stating matching upper and lower bounds to the testing problem \eqref{eqn:hypo_sparse} for regimes characterized by Dorbushin Type correlation decay condition. This section also collects a few classes of examples to demonstrate the applicability of the general result. Section \ref{section:latice} focuses on the special case of nearest neighbor lattices and contains optimal results in both high and low temperature (plus boundary pure phase) regimes. These results crucially depend on correlation decay results in these models -- the proof of which relies on FK-percolation theory. Consequently, Section \ref{section:technical_backgroud} is devoted to providing the necessary background. Section \ref{section:aux_result} contains some additional results on estimation-rate lower bounds for the classical two parameter Ising model on general graphs. Finally Section \ref{section:proofs_main} collects the proofs of all the results in the paper.

	\subsection{Notation}\label{section:preliminaries} 
	
	Throughout, for any probability measure $\P_{\theta}$, indexed by some paramter vector $\theta$, defined for the distribution of the random vector $\bX$ in study, we shall let we shall $\E_{\theta},\mathrm{Var}_{\theta},\mathrm{Cov}_{\theta}$ denote its expectation, variance and covariance operators respectively. For example,  $\E_{\beta,\bQ,\bmu},\mathrm{Var}_{\beta,\bQ,\bmu}$, $\mathrm{Cov}_{\beta,\bQ,\bmu}$ will denote the expectation, variance, and covariance operators corresponding to the measure $\P_{\beta,\bQ,\bmu}$. The coordinates of $X_i$ of $\bX$ following any Ising model will often be referred to as the spin for vertex $i$. For a given sequence of symmetric matrices $\mathcal{Q}=\{\bQ_{n\times n}\}_{n=2}^{\infty}$ (all with non-negative entries) we define the critical temperature as
	\be 
	\beta_{c}(\mathcal{Q})=\inf\left\{\beta>0: \lim_{h\downarrow 0}\lim_{n\rightarrow \infty}\E_{\beta,\bQ,\bmu(h)}\left(\frac{1}{n}\sum_{i=1}^n X_i\right)>0\right\},
	\ee
	where we let $\bmu(h)=(h,\ldots,h)^T\in \mathbb{R}^n$ denote the vector with all coordinates equal to $h$. Indeed, the definition above makes sense if the limits exist. In the examples pursued in the rest of the draft the existence of the limit is a par of classical statistical physics literature and we shall note relevant references whenever talking about critical temperature in our examples. We shall refer to low positive values of $\beta$ as high temperature and high positive values as low temperature regimes. The exact calibration of these low and high values will often be through $\beta_{c}(\mathcal{Q})$ -- once well defined (i.e. $0\leq \beta<\beta_{c}(\mathcal{Q})$ will correspond to high temperature and $\beta>\beta_{c}(\mathcal{Q})$ will be referred to as low temperature regime.) When referring to Ising models on sequence of graphs, $\mathcal{Q}$ will correspond to the sequence of scaled adjacency matrices of the graph.  
	
	We also let for any to Euclidean vectors $v_1,v_2$, $\|v_1-v_2\|_p$ denote the Euclidean $L_p$ norm between them.  We mostly use $\mathbb{G}_n=(V_n,E_n)$ to stand for simple labeled graph with vertex set $V_n=\{1,\dots,n\}$ and edge set $E_n \subseteq V_n\times V_n$, and denote its adjacency matrix by $G_n$. For any two vertices $i,j\in V_n$ of the graph $\mathbb{G}_n$, we shall use $d_{\mathbb{G}_n}(i,j)$ to denote the graph distance between $i$ and $j$ (and will often suppress the dependence of the notation on $\mathbb{G}_n$ once clear from the context). We shall also say that two vertices are connected in $\mathbb{G}_n$ and denote it by $i\stackrel{\mathbb{G}_n}{\leftrightarrow} j$ when there is a path in $\mathbb{G}_n$ connecting vertices $i$ and $j$. Finally, we let $\mathbb{Z}^d$ denote the integer lattice in $d$-dimensions. 
	
	We shall often abbreviate $\{1,\ldots,n\}$ as $[n]$ as well. The results in this paper are mostly asymptotic (in $n$) in nature and thus requires some standard asymptotic  notations.  If $a_n$ and $b_n$ are two sequences of real numbers then $a_n \gg b_n$ (and $a_n \ll b_n$) implies that ${a_n}/{b_n} \rightarrow \infty$ (and ${a_n}/{b_n} \rightarrow 0$) as $n \rightarrow \infty$, respectively. Similarly $a_n \gtrsim b_n$ (and $a_n \lesssim b_n$) implies that $\liminf_{n \rightarrow \infty} {{a_n}/{b_n}} = C$ for some $C \in (0,\infty]$ (and $\limsup_{n \rightarrow \infty} {{a_n}/{b_n}} =C$ for some $C \in [0,\infty)$). Alternatively, $a_n = o(b_n)$ will also imply $a_n \ll b_n$ and $a_n=O(b_n)$ will imply that $\limsup_{n \rightarrow \infty} \ a_n / b_n = C$ for some $C \in [0,\infty)$).

	\section{Main Results:}\label{section:main_result} We state the main results of the paper divided in two subsections. First, in Section \ref{section:general_graphs}, we show that under a Dorbushin type uniqueness condition on the null distribution $\P_{\beta,\bQ,\mathbf{0}}$ it is possible to get the matching upper and lower bounds for the testing problem \ref{eqn:hypo_sparse} in ferromagnetic Ising models. We subsequently provide a few examples of Ising models satisfying such conditions. Subsequently, in Section \ref{section:latice}, we concentrate on the special case of nearest neighbor model on lattices.

	\subsection{\bf General Graphs:} \label{section:general_graphs} We begin with a definition which will help us state the main result of this subsection.
	\begin{defn}\label{def_condition_dorbushin} Suppose $\bX\sim \P_{\beta,\bQ,\bmu}$. We say
		\begin{enumerate}
			\item  $(\beta,\bQ)$ is Ferromagnetic if $\min\limits_{i,j\in [n]}\beta \bQ_{i,j}\geq 0$.
			
			\item  $(\beta,\bQ)$ satisfies Condition (D) if there exists a $C>0$  such that for any $n\geq 1$ the following hold $$\|\bQ\|_{\infty \rightarrow \infty}\leq C\quad \text{and}\quad \max_{i\in [n]}\sum_{j=1}^n\mathrm{Cov}_{\beta,\bQ,\mathbf{0}}(X_i,X_j)\leq C.
			$$	
		\end{enumerate}

	\end{defn}
	
	The following is the main result of this subsection under the conditions introduced above.	
	
	\begin{theorem}\label{theorem:arbitrarysparse_general}
		Suppose $X\sim \P_{\beta,\bQ,\bmu}$  where $(\beta,\bQ)$ is Ferromagnetic satisfying condition (D). Then the following hold.
		\begin{enumerate}
			\item If $\tanh(A)\lesssim \sqrt{n}/s$ then no test is asymptotically powerful.
			\item If $\tanh(A)\gg \sqrt{n}/s$ then a test based on rejecting for large values of $\sum_{i=1}^n X_i$ is asymptotically powerful.
		\end{enumerate}
	\end{theorem}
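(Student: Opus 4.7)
\textit{Upper bound ($\tanh(A) \gg \sqrt{n}/s$).} The plan is to analyze the sum statistic $T_n := \sum_{i=1}^n X_i$ via Chebyshev's inequality. Under $\P_{\beta,\bQ,\mathbf{0}}$, the spin-flip symmetry $\bX \mapsto -\bX$ forces $\E_{\beta,\bQ,\mathbf{0}}[X_i]=0$, while Condition (D) bounds $\mathrm{Var}_{\beta,\bQ,\mathbf{0}}(T_n) = \sum_{i,j}\mathrm{Cov}_{\beta,\bQ,\mathbf{0}}(X_i,X_j) \leq nC$; hence $T_n = O_p(\sqrt{n})$ under the null. Under the alternative, the key pointwise estimate is $\E_{\beta,\bQ,\bmu}[X_i] \geq \tanh(A)$ for every $i \in \mathrm{supp}(\bmu)$. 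I would establish this from the Griffiths--Kelly--Sherman (GKS) inequalities: the single-spin magnetization in a ferromagnetic Ising model with $\bmu \geq 0$ is nondecreasing both in every external-field coordinate and in every coupling. Reducing down to zero couplings and a single-site field $A e_i$ recovers the Rademacher mean $\tanh(A)$; summing gives $\E_{\beta,\bQ,\bmu}[T_n] \geq s\tanh(A)$ (the off-support spins contribute nonnegatively by the same monotonicity). For the variance, the Griffiths--Hurst--Sherman (GHS) inequality $\partial_{\mu_k}\mathrm{Cov}_\bmu(X_i,X_j) \leq 0$ (for $\bmu \geq 0$) yields $\mathrm{Var}_{\beta,\bQ,\bmu}(T_n) \leq \mathrm{Var}_{\beta,\bQ,\mathbf{0}}(T_n) \leq nC$. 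Chebyshev on both hypotheses then shows that the threshold test rejecting when $T_n \geq c_n\sqrt n$ (for any $c_n \to \infty$ with $c_n\sqrt n = o(s\tanh(A))$) is asymptotically powerful.

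\textit{Lower bound ($\tanh(A) \lesssim \sqrt{n}/s$).} I would apply the Ingster chi-square method with the uniform prior $\pi = \mathrm{Unif}\{A\mathbf{1}_S : S \subseteq [n],\, |S|=s\}$ and mixture $\bar{\P}_1 = \int \P_{\beta,\bQ,\bmu}\, d\pi(\bmu)$. Since $\mathrm{Risk}(T,\Xi(s,A),\bQ) \geq 1 - \tfrac{1}{2}\sqrt{\chi^2(\bar{\P}_1,\P_{\beta,\bQ,\mathbf{0}})}$ for every test $T$, it suffices to show the chi-square stays bounded (with a constant $<4$). A standard second-moment expansion gives
\begin{equation*}
\chi^2(\bar{\P}_1,\P_{\beta,\bQ,\mathbf{0}}) + 1 = \E_{S,S'}\!\left[\frac{\E_{\beta,\bQ,\mathbf{0}}\bigl[\exp(AT_S+AT_{S'})\bigr]}{\E_{\beta,\bQ,\mathbf{0}}\bigl[\exp(AT_S)\bigr]\,\E_{\beta,\bQ,\mathbf{0}}\bigl[\exp(AT_{S'})\bigr]}\right],
\end{equation*}
with $S,S'$ independent uniform $s$-subsets and $T_S = \sum_{i\in S} X_i$. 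Writing the cumulant generating function $f(\bmu) = \log \E_{\beta,\bQ,\mathbf{0}}[\exp(\bmu^\top\bX)]$ (whose Hessian at $\bmu$ is the $\bmu$-tilted covariance matrix), the log of the inner ratio $R(S,S')$ is a mixed second difference, and a double Taylor expansion along the rectangle with corners $\mathbf{0},\,A\mathbf{1}_S,\,A\mathbf{1}_{S'},\,A(\mathbf{1}_S+\mathbf{1}_{S'})$ yields
\begin{equation*}
\log R(S,S') = A^2 \int_0^1\!\!\int_0^1 \sum_{i\in S,\, j\in S'} \mathrm{Cov}_{\beta,\bQ,\, sA\mathbf{1}_S+tA\mathbf{1}_{S'}}(X_i,X_j)\, ds\, dt.
\end{equation*}
GHS bounds each tilted covariance by $\mathrm{Cov}_{\beta,\bQ,\mathbf{0}}(X_i,X_j)$, giving $\log R(S,S') \leq A^2 V(S,S')$ with $V(S,S') := \sum_{i\in S,\, j\in S'} \mathrm{Cov}_{\beta,\bQ,\mathbf{0}}(X_i,X_j) \geq 0$. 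Condition (D) then forces $\E_{S,S'}[V] \leq Cs^2/n$, and the Frobenius bound $\sum_{i,j}\mathrm{Cov}_{\beta,\bQ,\mathbf{0}}(X_i,X_j)^2 \leq nC$ (using $|\mathrm{Cov}|\leq 1$ with the row-sum bound) gives $\E_{S,S'}[V^2] = O(s^4/n^2)$. A Taylor expansion of the exponential, with a case split on the size of $s/\sqrt n$, controls $\E_{S,S'}[\exp(A^2 V)]$ and yields a bounded chi-square under $\tanh(A) \lesssim \sqrt n/s$.

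\textit{Main obstacle.} The upper bound is essentially mechanical once GKS and GHS are invoked. The substantive work is in the lower bound: passing from the pointwise inequality $\log R(S,S') \leq A^2 V(S,S')$ to a uniform bound on the exponential moment $\E_{S,S'}[\exp(A^2 V(S,S'))]$. The deterministic bound $V(S,S') \leq Cs$ can leave the exponent of order $n/s$ in the small-$s$ regime (where the hypothesis $\tanh(A) \lesssim \sqrt n/s$ is weakest), so uniform exponentiation alone is insufficient; instead one has to exploit the concentration of the random bilinear form $V(S,S') = \mathbf{1}_S^\top \Sigma_0 \mathbf{1}_{S'}$ about its small mean, handling $s \ll \sqrt n$ and $s \gg \sqrt n$ separately. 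This broadly parallels Ingster's classical sparse-alternative analysis in the iid setting, now with the full Ising null-covariance matrix $\Sigma_0$ replacing the diagonal covariance.
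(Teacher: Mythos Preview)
Your upper bound is essentially the paper's argument: Chebyshev on $\sum_i X_i$, with the null variance bounded by Condition~(D), the alternative variance dominated by the null variance via GHS, and the alternative mean lower-bounded by $s\tanh(A)$. Your use of GKS monotonicity (in both couplings and fields) to reduce to a single free spin and obtain $\E_{\beta,\bQ,\bmu}[X_i]\ge\tanh(A)$ is a clean variant of the paper's Lemma~\ref{lemma:expectation_vs_externalmag}.

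For the lower bound the routes diverge. The paper does not compute a chi-square directly; it invokes a reduction from \cite{mukherjee2016global} to two tail statements about $\sum_i X_i$ (your statistic), namely \eqref{eq:null_large}--\eqref{eq:alt_small}, and then verifies those via GHS/GKS, an exponential-tilting argument, and a separate treatment of $s\ll\sqrt n$ by pinning. Your direct second-moment computation, with the identity $\log R(S,S')=\int_0^1\!\int_0^1 A^2\,\mathbf 1_S^\top \Sigma_{sA\mathbf 1_{S'}+tA\mathbf 1_S}\mathbf 1_{S'}\,ds\,dt$ and the GHS bound $\Sigma_\nu\preceq\Sigma_0$, is a legitimate alternative and, for $s\gtrsim\sqrt n$, can be pushed through: there $A\asymp\tanh(A)\lesssim\sqrt n/s$ is bounded, and a Bernstein-type MGF bound on the sampling-without-replacement sum $V(S,S')=\sum_{j\in S'}w_j(S)$ with $0\le w_j\le C$, $\sum_j w_j\le Cs$, $\sum_j w_j^2\le C^2 s$ gives $\E_{S,S'}[\exp(A^2V)]\le\exp(O(1))$.

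The gap is the regime $s\ll\sqrt n$. There the hypothesis $\tanh(A)\lesssim\sqrt n/s$ is vacuous, so $A$ may diverge, and your inequality $\log R(S,S')\le A^2 V(S,S')$ becomes useless. Concentration of $V$ cannot repair this: already in the i.i.d.\ case $\bQ=0$ one has $V(S,S')=|S\cap S'|$ and the \emph{exact} value $\log R=|S\cap S'|\log(1+\tanh^2 A)\le \tanh^2(A)\,|S\cap S'|$, whereas your bound gives $A^2|S\cap S'|$; for any fixed $n$ and $A\to\infty$, $\E_{S,S'}[\exp(A^2|S\cap S'|)]=\infty$ while $\E_{S,S'}[\exp(\tanh^2(A)|S\cap S'|)]$ stays bounded. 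So the missing idea is to get $\tanh(A)$, not $A$, into the exponent---which requires exploiting the decay of the \emph{tilted} covariances as the field grows, not just the GHS comparison to $\Sigma_0$. The paper sidesteps this entirely: by stochastic monotonicity in $A$ it passes to $A\to\infty$, uses $\|\bQ\|_{\infty\to\infty}\le C$ to show $\P_{\beta,\bQ,\bmu}(X_i=1,\,i\in S)\to 1$, and observes that conditionally on $\{X_i=1:i\in S\}$ the law on $S^c$ is again a ferromagnetic Ising model with external field $\tilde\mu_{S,i}=\sum_{j\in S}\bQ_{ij}$ satisfying $\sum_i\tilde\mu_{S,i}\le Cs\ll\sqrt n$, so the dense-regime argument applies to $\sum_{i\in S^c}X_i$. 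If you want to stay within the chi-square framework, you would need an analogue of the i.i.d.\ bound $R\le(1+\tanh^2 A)^{|S\cap S'|}$ in the dependent setting, which is not delivered by your current estimates.
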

	A few comments are in order regarding the conditions and implications of Theorem \ref{theorem:arbitrarysparse_general} as well as its connections to the results in \cite{mukherjee2016global}. We list them below.
	
	\begin{enumerate}\itemsep5pt
		\item [(i)] The Condition (D) is related to the decay of correlations between two sites $i$ and $j$ under null Ising measure $\P_{\beta,\bQ,\mathbf{0}}$ and not under the alternative measure $\P_{\beta,\bQ,\bmu}$. Consequently, such a condition is potentially ``easier" to verify. The nomenclature of condition (D) is used to reflect its similarity to classical Dorbushin's Condition of uniqueness of infinite volume Ising measures \citep[Chapter 6]{friedli2017statistical}. We note that the covariance part of Condition (D) is trivially true for $\beta=0$ and is expected to hold for small enough value of $\beta$ -- which will be referred to as high temperature regions following physics terminology. 
		
		\item [(ii)] The verification of Condition (D) will indeed follow from suitable control on pairwise correlations between $X_i$ and $X_j$. For example, for the mean-field Curie-Weiss model ($\bQ_{i,j}=\frac{1}{n}\mathbf{1}(i\neq j)$) it can be shown that (see Proposition \ref{prop_examples} below) $\mathrm{Cov}_{\beta,\bQ,\mathbf{0}}\leq \frac{C}{n}$ for $0\leq \beta<1$ and consequently the verification of Condition (D) is immediate in the high temperature region up to the critical temperature of $\beta=1$. We note that, Condition (D) is not true for the Curie-Weiss model for low temperature $\beta>1$ and a separate argument, not implied by Theorem \ref{theorem:arbitrarysparse_general}, is needed to tackle this. Indeed, it can be shown that \citep{comets1991asymptotics,mukherjee2016global}, Condition (D) holds conditional on $\sum_{i=1}^nX_i>0$) and this turns out to be the sufficient for the validity of the conclusion of Theorem \ref{theorem:arbitrarysparse_general}. For other Ising models, verification of Condition (D) under weakest possible conditions on $\beta$ is itself a research program and consequently in Section \ref{section:latice} we consider the special case of the nearest neighbor Ising model on lattices in $d$-dimensions (see Section \ref{section:latice}) in detail to explore verification of this condition. Similar to the Curie-Weiss model, we show that there exists a $\beta_c(d)$ (which coincides with a point of phase transition in nearest neighbor Ising models on $\mathbb{Z}^d$) such that for all $0\leq\beta<\beta_c(d)$ one has the validity of Condition (D). In contrast, for $\beta>\beta_c(d)$ we only show the validity of Condition (D) for the model with plus boundary condition (in essence similar to the $\sum_{i=1}^nX_i>0$ conditional statement valid for the Curie-Weiss Model).

		\item [(iii)] The if part of Theorem \ref{theorem:arbitrarysparse_general} only requires $\mathrm{Var}_{\beta,\bQ,\mathbf{0}}(\sum_{i=1}^n X_i)=O(n)$ which follows from  the second part of Condition (D).
		In contrast, 
		the full force of both the Ferro-magnetism and Condition (D) assumed on the pair $(\beta,\bQ)$ is required to prove the necessity part of the theorem. In this regard, the conditions and proof of a similar lower bound in \cite[Theorem 6]{mukherjee2016global} pertains to mean-field type graphs and especially can only handle regular graphs with degree diverging faster that $\sqrt{n}$. 
		Moreover, as we discuss in Section \ref{section:latice} and Section \ref{section:technical_backgroud}, the verification of correlation condition can benefit from several sharp results in probability theory and statistical physics. 
		\item [(iv)]	Note that our tests, when optimal, are free of both $\beta$ and the underlying graph (i.e. $\bQ$) -- which are nuisance parameters (potentially high dimensional) in the problem. In particular, the optimal test in Theorem \ref{theorem:arbitrarysparse_general} rejects whenever $\sum_{i=1}^nX_i\geq L_n\sqrt{n}$ for some slowly growing sequence $L_n$ which does not depend on with $\beta$ or $\bQ$. Therefore, unlike \cite{mukherjee2016global} the tests are adaptive over these nuisance parameters.
		
	\end{enumerate}

	Our next result verifies Condition (D) in specific classes of examples for high temperature regimes.
	
	\begin{prop}\label{prop_examples} Let $\mathbb{G}_n=(V_n,E_n)$ be a sequence of simple labeled graph with vertex set $V_n=\{1,\dots,n\}$ and edge set $E_n \subseteq V_n\times V_n$ and corresponding $\mathbf{Q}=|V_n| G_n/2|E_n|$, where $G_n$ is the adjacency matrix of $\mathbb{G}_n$. Then Condition (D) is satisfied whenever
		\begin{enumerate}
			
			\item $\mathbb{G}_n$ satisfies $|E_n|=\Theta(n^2)$ and $0\leq \beta<c$ where $c=\liminf_{n\rightarrow \infty}|E_n|/n^2$.
			
			
			\item $\mathbb{G}_n$ corresponds to a bounded degree graph of degree at most $k$ and $0\leq(k-1)^2\tanh(\beta/(k-1))<1$.
			
			\item $\mathbb{G}_n$ corresponds to a rooted regular tree of degree $k$ and $0\leq (k-1)\tanh(\beta/k)<1$.
		\end{enumerate}
		
	\end{prop}
	
	It is worth noting that none of these three classes of examples are covered by \cite[Theorem 6]{mukherjee2016global}. Especially, the proof of \cite[Theorem 6]{mukherjee2016global} relied heavily relied on both high degree  regularity of the underlying graph as well high temperature regime of $\beta$. In contrast, Proposition \ref{prop_examples} and Theorem \ref{theorem:arbitrarysparse_general} only rely on the high temperature requirement -- implied by the second part of Condition (D). 

	Unlike \cite{mukherjee2016global} however, the result for dense graphs ($|E|=\Theta(n^2)$) and bounded degree graphs above, does not guarantee validity of Condition (D) right up to the critical temperature in these models. Indeed, the critical temperature $\beta$ in these models are not immediately clear. Only when $|E_n|=n(n-1)$ and $\mathbb{G}_n$ corresponds to the Curie-Weiss model, the result above is valid up to the critical point in the model i.e. $\beta=1$. In contrast, for more general dense graphs converging in the cut-metric, the critical $\beta$ corresponds to the inverse of the Hilbert-Schmidt operator norm of the limiting graphon. We do not assume any such convergence of the underlying graph sequence since even under such convergence it is not immediately clear whether the results will hold right up to the critical temperature. 
	
	The result for rooted $k$-regular tree is sharp up to the thermodynamic phase transition of the model where $\beta_c$ satisfies $(k-1)\tanh(\beta_c/k)=1$ and recovers the case of the line graph where the result in Proposition \ref{prop_examples} holds for all $0\leq \beta<\infty$.  The result however is not true for Ising model on locally tree like graphs \citep{dembo2010ising}. Our next result derives detection thresholds for a special class of locally tree like graphs, namely, the sparse Erd\"{o}s-R\'{e}nyi Graphs model. In particular, we let  $\mathbb{G}_n=(V_n,E_n)\sim \mathcal{G}(n,\lambda/n)$ denote the Erd\"{o}s-R\'{e}nyi graph formed by joining each pair $i,j\in \{1,\ldots,n\}$ independently with probability $\frac{\lambda}{n}$ for some fixed $\lambda>0$. 
	
	\begin{prop}\label{prop_erdos_renyi}
		Let $\mathbb{G}_n=(V_n,E_n)\sim \mathcal{G}(n,\lambda/n)$ with adjacency matrix $G_n$ and  $\mathbf{Q}= G_n/\lambda$. Suppose $X\sim \P_{\beta,\bQ,\bmu}$. 
		\begin{enumerate}
			\item Suppose $0\leq \beta<\tanh^{-1}(1/2\lambda)$. Then asymptotically powerful tests for testing \eqref{eqn:hypo_sparse} exists if and only if $\tanh(A)\gg \sqrt{n}/s$.
			
			\item Irrespective of $\beta\in \mathbb{R}$, if $s\ll n$, no asymptotically powerful test exists if $\tanh(A)\ll \sqrt{n}/s$.
		\end{enumerate}
	\end{prop}
	
	Proposition \ref{prop_erdos_renyi} shows that for sparse  Erd\"{o}s-R\'{e}nyi Graphs, the lower bound of detection for any $\beta\in \mathbb{R}$ is the same as $\beta=0$. As we see in the proof, that this is a simple fact due to the existence of $O(n)$ isolated vertices in the graph. Therefore, the original claim in \cite{mukherjee2016global} (i.e. it is possible to detect lower in asymptotic order signals at critical temperature) needs to be somewhat modified to only considered connected graphs. In regard to the proof of upper bound in Proposition \ref{prop_erdos_renyi}, we first note that the condition $\|\mathbf{Q}\|_{\infty\rightarrow \infty}$ does not hold since maximum degree of a $\mathbb{G}_n\sim \mathcal{G}(n,\lambda/n)$ behaves asymptotically like $\log{n}/\log\log{n}$. The existence of such high degree vertices compared to the average degree of the graph, also destroys the validity of Condition (D). Consequently, the proof, although very simple, does not follow from either \cite{mukherjee2016global} or by checking the conditions of Theorem \ref{theorem:arbitrarysparse_general}. Finally it is well known \citep{dembo2010ising} that Ising Models on Erd\"{o}s-R\'{e}nyi graphs $\mathcal{G}(n,\lambda/n)$ undergo a phase transition with critical temperature $\beta_c(\mathcal{Q})=\tanh^{-1}(1/2\lambda)$ and we note that the upper bound holds right up to the critical point in the model i.e. $\tanh^{-1}(1/2\lambda)$ (here to put ourselves in the context of the definition of critical temperature in Section \ref{section:preliminaries} we take $\mathcal{Q}=\{\mathbf{Q}=G_n/2\lambda\}_{n\geq 1}$).

	Finally, going back to Proposition \ref{prop_examples}, we note that the requirement $0<(k-1)^2\tanh(\beta/(k-1))<1$ is not necessarily sharp for bounded degree graphs. The bound can be strengthened to $0<(k-1)\tanh(\beta/(k-1))<1$ for graphs of polynomial neighborhood growth. However, even then, this result is not sharp for nearest neighbor models on lattices. Subsequently, our next subsection is devoted to explore the case of nearest neighbor Ising models and understand the effect of critical $\beta$ on the testing problem \eqref{eqn:hypo_sparse}.

	\subsection{\bf Nearest Neighbor Interactions:}\label{section:latice}
	The main result of this section pertains to the detection thresholds for nearest neighbor Ising Models on lattices -- \textit{in any fixed dimension $d$}. We need a few notation to study the nearest neighbor Ising models. In particular, it is convenient to consider the points $i=1,\dots, n$ to be vertices of $d$-dimensional hyper-cubic lattice and the underlying graph (i.e. $\bQ$) to be the nearest neighbor (in sense of Euclidean distance) graph on these vertices. More precisely, given integers positive integers $n,d$, we consider a growing sequence of integer lattice hyper-cubes of dimension $d$ as $\Lambda_{n,d}=[-n^{1/d},n^{1/d}]^d\cap \mathbb{Z}^d$ where $\mathbb{Z}^d$ denotes the d-dimensional integer lattice. The elements of $\Lambda_{n,d}$ (and also those of $\mathbb{Z}^d$) will be referred to as vertices. Consider a family of random variables defined on the vertices of $\Lambda_{n,d}$ as $\bX \in \{-1,+1\}^{\Lambda_{n,d}}$ (to be referred to as ``spins" hereafter) having the following probability mass function (p.m.f.)
	\be
	\P_{\beta,\bQ,\bmu}(\bX=\bx)=\frac{1}{Z(\mathbf{Q}, \mathbf{\bmu})}\exp{\left(\frac{\beta}{2}\bx^\top\mathbf{Q} \bx+\bmu^\top\bx\right)},\qquad \forall \bx \in \{\pm 1\}^{\Lambda_{n,d}},
	\label{eqn:lattice_ising}
	\ee
	where as usual $\mathbf{Q}=(\bQ_{ij})_{i,j\in\Lambda_{n,d}}$ is a symmetric and hollow \textcolor{black}{array} (i.e. $\bQ_{ii}=0$ for all $i\in \Lambda_{n,d}$) with elements indexed by pairs of vertices in $\Lambda_{n,d}$ (organized in some pr-fixed lexicographic order), $\bmu:=(\mu_i:i\in \Lambda_{n,d})^\top\in \mathbb{R}^{\Lambda_{n,d}}$ referred to as the external magnetization vector indexed by vertices of $\Lambda_{n,d}$ , $\beta>0$ is the ``inverse temperature", and $Z(\beta,\mathbf{Q}(\Lambda_{n,d}), \mathbf{\bmu})$ is a normalizing constant. Note that, the in this notation $i,j\in \Lambda_{n,d}$ are vertices of the $d$-dimensional integer lattice and hence correspond to $d$-dimensional vector with integer coordinates. Therefore, using this notation, by nearest neighbor graph we shall mean $\bQ_{ij}=\bQ_{ij}(\Lambda_{n,d})=\mathbf{1}(0<\|i-j\|_1= 1)$.
	 and the main focus of this section  {will be to understand the detection thresholds (for testing $\bmu$) at both high,low, and critica dependence parameter $\beta$. To put us in the notation of critical temperature defined in Section \ref{section:preliminaries} we take $\mathcal{Q}(d)$ be the sequence of matrices $\{\bQ(\Lambda_{n,d})\}_{n\geq 1}$ and define
		\be 
		\beta_c(d)=\beta_{c}(\mathcal{Q}(d))=\inf\left\{\beta>0: \lim_{h\downarrow 0}\lim_{n\rightarrow \infty}\E_{\beta,\bQ(\Lambda_{n,d}),\bmu(h)}\left(\frac{1}{n}\sum_{i\in \Lambda_{n,d}} X_i\right)>0\right\},
		\ee
		where we let $\bmu(h)$ denote the vector in $\mathbb{R}^{|\Lambda_{n,d}|}$ with all coordinates equal to $h$.
		The existence and equivalence of the above notions (such as ones including uniqueness of infinite volume measure) of critical temperature in nearest neighbor Ising Model is a topic of classical statistical physics and we refer the interested reader to the excellent expositions in \cite{friedli2017statistical,duminil2017lectures} for more details.
		
		This value of $\beta_c(d)$ (which is known to be strictly positive for any fixed $d\geq 1$) is referred to as the critical inverse temperature in dimension $d$ and as mentioned in Section \ref{section:preliminaries} the behavior of the system of observations $\bX$ changes once $\beta$ exceeds this threshold. For $d=1$, it is known from the first work in this area \citep{ising1925beitrag} that $\beta_c(1)=+\infty$ and consequently the Ising model in 1-dimension is said to have no phase transitions. The seminal work of \cite{onsager1944crystal} provides a formula for $\beta_c(2)$ and obtaining an analytical formula for $\beta_c(d)$ for $d\geq 3$ remains open. Consequently, results only pertain to the existence of a strictly non-zero finite $\beta_c(d)$ which governs the macroscopic behavior of the system of observations $X_i,i\in \Lambda_{n,d}$ as $n\rightarrow \infty$. In particular, the average magnetization $n^{-1}\sum X_i$ converges to $0$ in probability for $\beta<\beta_c(d)$ and to a mixture of two delta-dirac random variables $m_+(\beta)$ and $m_-(\beta)=-m_+(\beta)$, for $\beta>\beta_c(d)$. This motivates defining Ising models in pure phases as follows. 
		Letting $\partial\Lambda_{n,d}$ denote the set of boundary vertices of $\Lambda_{n,d}$ w.r.t. the infinite $d$-dimensional lattice $\mathbb{Z}^d$, we denote \be
		\P^{+}_{\beta,\bQ(\Lambda_{n,d}),\bmu}(\bX=\bx)=\P_{\beta,\bQ(\Lambda_{n,d}),\bmu}(\bX=\bx|X_i=+1,i\in \partial\Lambda_{n,d}),\label{eqn:lattice_ising_plus}\\ \P^{-}_{\beta,\bQ(\Lambda_{n,d}),\bmu}(\bX=\bx)=\P_{\beta,\bQ(\Lambda_{n,d}),\bmu}(\bX=\bx|X_i=-1,i\in \partial\Lambda_{n,d}),\label{eqn:lattice_ising_minus}
		\ee
		to be Ising Models \eqref{eqn:general_ising} is $+$ and $-$ boundary conditions respectively. It is well known (\citep{ellis2007entropy}), that for $0\leq \beta<\beta_c(d)$ the asymptotic properties of the models $\P^{+}_{\beta,\bQ,\bmu}$, $\P^{-}_{\beta,\bQ,\bmu}$, and $\P_{\beta,\bQ,\bmu}$ (referred to as the Ising Model with free boundary conditions) are similar (i.e. they have all the same infinite volume $n\rightarrow \infty$ weak limit). However, for $\beta>\beta_c(d)$, the model $\P_{\beta,\bQ,\bmu}$ behaves asymptotically as the mixture of $\P^{+}_{\beta,\bQ,\bmu}$ and $\P^{-}_{\beta,\bQ,\bmu}$. 
		Indeed, for $\beta>\beta_c(d)$ it is not expected to have condition (D) hold for $\P_{\beta,\bQ,\bmu}$ and consequently we only present our result for the measure $\P^{+}_{\beta,\bQ,\bmu}$ in such cases. Although we provide intuitive reasoning (see Remark \ref{remark:negative_boundary} on why a similar result might hold for both negative boundary condition (i.e. $\P^{-}_{\beta,\bQ,\bmu}$) as well as free boundary condition (i.e. the original model $\P_{\beta,\bQ,\bmu}$) we do not yet have access to a rigorous argument in this regard. 
		
	\subsubsection{\bf Behaviour at Non-Critical  Temperatrure ($\beta\neq \beta_c(d)$)}\label{section:non_critical_lattice}

		Since we only proved Theorem \ref{theorem:arbitrarysparse_general} for the free boundary case, we state the following proposition which is proved in Section \ref{section:proofs_main}.
		
		\begin{prop}\label{prop:lattice_suff}
			Let $\beta>0$ and $\bQ_{ij}=\I(0<\|i-j\|_1\leq 1)$  for $i,j\in \Lambda_{n,d}$. 
			\begin{enumerate}
				\item Suppose $0\leq \beta<\beta_c(d)$ and $\mathrm{Cov}_{\beta,\bQ,\mathbf{0}}\left(X_i,X_j\right)\leq \exp\left(-c_{\beta,d}\|i-j\|_1\right)$ for some $c_{\beta,d}>0$ (depending on $\beta,d$)Then, for testing \eqref{eqn:hypo_sparse}, no tests are asymptotically powerful if $s\tanh(A)\lesssim \sqrt{n}$. 
				
				\item Suppose $\beta>\beta_c(d)$ and $\mathrm{Cov}^+_{\beta,\bQ,\mathbf{0}}\left(X_i,X_j\right)\leq c_{\beta,d}\exp\left(-c_{\beta,d}\|i-j\|_1\right)$ for some $c_{\beta,d}>0$ (depending on $\beta,d$) Then, for testing \eqref{eqn:hypo_sparse}, no tests are asymptotically powerful if $s\tanh(A)\lesssim \sqrt{n}$. 
			\end{enumerate}
		\end{prop}	
		
		The proof of the first part of this proposition is immediate by Theorem \ref{theorem:arbitrarysparse_general} (since the stated correlation decay and the nearest neighbor structure of $\bQ$ imply Condition (D)). The proof of the second part is partly similar to the proof of Theorem \ref{theorem:arbitrarysparse_general} with the difference being in the proof of \eqref{eq:null_large} part of the proof (see Section \ref{section:proofs_main}).
		
		To proceed, we simply note that in order to invoke Corollary \ref{prop:lattice_suff} for the nearest neighbor Ising Model, it is enough to check that the correlation between $X_i$ and $X_j$ decrease exponentially as a function of $\|i-j\|_1$. We state the exact result in our next theorem.
		%
		

		\begin{theorem}\label{thm:correlation_decay}
			Let $\beta>0$ and $\bQ_{ij}=\I(0<\|i-j\|_1\leq 1)$  for $i,j\in \Lambda_{n,d}$. Then there exists  $c_{\beta,d}>0$ (depending on $\beta$ and $d$) such that the following hold for every $n\geq 2$
			\be
			\mathrm{Cov}_{\beta,\bQ,\mathbf{0}}\left(X_i,X_j\right)&\leq \exp\left(-c_{\beta,d}\|i-j\|_1\right), \quad 0\leq \beta<\beta_c(d)\\
			\mathrm{Cov}^{+}_{\beta,\bQ,\mathbf{0}}\left(X_i,X_j\right)&\leq c_{\beta,d} \exp\left(-c_{\beta,d}\|i-j\|_1\right), \quad \beta>\beta_c(d).\label{eqn:correlation_decay}
			\ee
		\end{theorem}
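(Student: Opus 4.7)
The plan is to pass through the Fortuin--Kasteleyn (FK) random-cluster representation of the $q=2$ model, which rewrites spin covariances as connection probabilities in a dependent bond-percolation process. In both regimes the required exponential decay then reduces to a subcritical-type decay of an FK connection event, and the work lies in producing the right coupling identity and then importing finite-volume FK estimates -- which is precisely what Section \ref{section:technical_backgroud} is set up to deliver.

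For $0 \le \beta < \beta_c(d)$: at $\bmu=\mathbf{0}$ spin-flip symmetry forces $\E_{\beta,\bQ,\mathbf{0}}(X_i)=0$, so $\mathrm{Cov}_{\beta,\bQ,\mathbf{0}}(X_i,X_j) = \E_{\beta,\bQ,\mathbf{0}}(X_iX_j)$. The Edwards--Sokal coupling with free boundary identifies this with $\phi^0_{\Lambda_{n,d}}(i \leftrightarrow j)$, the probability that $i$ and $j$ lie in the same cluster under the free FK measure with edge parameter $p(\beta)\in(0,1)$ and cluster weight $q=2$. Monotonicity of the free connection probability in volume gives $\phi^0_{\Lambda_{n,d}}(i \leftrightarrow j) \le \phi^0_{\mathbb{Z}^d}(i \leftrightarrow j)$, and sharpness of the FK-Ising phase transition (Aizenman--Barsky--Fern\'andez; Duminil-Copin--Tassion) yields $\phi^0_{\mathbb{Z}^d}(i \leftrightarrow j) \le \exp(-c_{\beta,d} \|i-j\|_1)$ for a strictly positive $c_{\beta,d}$.

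For $\beta > \beta_c(d)$ I would work with the wired FK measure $\phi^1_{\Lambda_{n,d}}$ representing the plus-boundary Ising law. The Edwards--Sokal dictionary gives $\E^{+}(X_i) = \phi^1(i \leftrightarrow \partial\Lambda_{n,d})$ and $\E^{+}(X_iX_j) = \phi^1(i \leftrightarrow j) + \phi^1(i \not\leftrightarrow j,\; i \leftrightarrow \partial\Lambda_{n,d},\; j \leftrightarrow \partial\Lambda_{n,d})$, whence after rearrangement
\begin{equation*}
\mathrm{Cov}^{+}_{\beta,\bQ,\mathbf{0}}(X_i,X_j) = \phi^1\bigl(i \leftrightarrow j,\; \{i,j\} \not\leftrightarrow \partial\Lambda_{n,d}\bigr) + \bigl[\phi^1(i\leftrightarrow \partial\Lambda_{n,d},\, j \leftrightarrow \partial\Lambda_{n,d}) - \phi^1(i\leftrightarrow \partial\Lambda_{n,d})\phi^1(j \leftrightarrow \partial\Lambda_{n,d})\bigr].
\end{equation*}
Both summands are non-negative, the second by FKG. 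The first is bounded by the probability that $i$ sits in a finite FK cluster of diameter at least $\|i-j\|_1/2$, a quantity that is exponentially small in the supercritical wired regime via a Peierls/contour argument. For the FKG-excess term I would invoke planar FK duality in $d=2$ (mapping it to a subcritical connection event on the dual lattice, which is handled by the previous case) and Pisztora-type coarse graining, or the Duminil-Copin--Raoufi--Tassion sharpness of supercritical truncated correlations, in $d \ge 3$.

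The hardest part is the plus-phase bound: unlike at high temperature, one cannot simply dominate the finite-volume quantity by its infinite-volume counterpart, because the plus phase is defined by the boundary condition and the FKG-excess event depends genuinely on $\partial\Lambda_{n,d}$. One therefore needs finite-volume analogues of FK-sharpness that are uniform in $n$ and do not degrade near the boundary, and extracting a single $c_{\beta,d}$ valid for every $n \ge 2$ and every pair $i,j \in \Lambda_{n,d}$ is the real quantitative burden -- which is precisely the content developed in Section \ref{section:technical_backgroud}.
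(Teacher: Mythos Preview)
Your high-temperature argument is essentially the paper's: both pass to the FK representation via Edwards--Sokal, then invoke the Duminil-Copin--Tassion sharpness. The paper stays in finite volume (dominating by a wired ball of radius $\|i-j\|_1$ around $i$ and applying the finite-volume sharpness theorem directly), while you pass to the infinite-volume free measure via domain monotonicity; either route is fine.

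For low temperature your decomposition is different from the paper's, and it is worth recording the contrast. The paper keeps the single identity $\mathrm{Cov}^+ = \phi^1(i\leftrightarrow j) - \phi^1(i\leftrightarrow\partial\Lambda_n)\phi^1(j\leftrightarrow\partial\Lambda_n)$ (with $\leftrightarrow$ allowing passage through the wired boundary), then does a case split on how far $i,j$ lie from $\partial\Lambda_n$. In each case it replaces the non-local events $\{i\leftrightarrow\partial\Lambda_n\}$, $\{j\leftrightarrow\partial\Lambda_n\}$ by localized versions $\{i\leftrightarrow\partial\Lambda_n \text{ inside } \Lambda_k(i)\}$ using a finite-volume ``good box'' lemma (built from Pisztora's renormalization with Bodineau's $\hat p_1 = p_c$), and then applies an exponential mixing lemma for events supported on well-separated boxes. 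Your route --- splitting into a ``finite-cluster'' term $\phi^1(i\leftrightarrow j,\,\{i,j\}\not\leftrightarrow\partial\Lambda_n)$ and an FKG-excess term for the boundary connections --- is an equivalent rearrangement and arguably cleaner conceptually, but the second term still involves the non-local events $\{i\leftrightarrow\partial\Lambda_n\}$, $\{j\leftrightarrow\partial\Lambda_n\}$, so you end up needing exactly the same localization-plus-mixing machinery the paper builds. Your final paragraph correctly identifies this as the crux.

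One point to tighten: you say the finite-cluster term is exponentially small ``via a Peierls/contour argument.'' A bare Peierls argument only gives this for $\beta$ sufficiently large, not for all $\beta>\beta_c(d)$. To cover the full supercritical range you need the Pisztora--Bodineau coarse-graining input (the same \eqref{eq:block_good} the paper uses for its good-box estimate), which you already cite for the second term; just be explicit that it is needed for the first term too.
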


		Theorem \ref{thm:correlation_decay} along with Proposition \ref{prop:lattice_suff} completes the proof of the claim that for Ising models on nearest neighbor lattices, away from critical temperature (at least in the positive pure phase) the detection thresholds for magnetization remains the same in terms on the necessary signal strength. The sufficiency of this signal strength follows easily by either the conditionally centered total magnetization test considered in \cite{mukherjee2016global} or simply the $\sum_{i=1}^n X_i$ based test (properly centered using $\E_{\beta,\bQ,\mathbf{0}}^+$ under the positive boundary condition case). Indeed, using a Peierls' argument type analysis (\cite{ellis2007entropy}) one can prove the result when $\beta$ is comfortably away from the critical $\beta_c(d)$. However, proving Theorem \ref{theorem:arbitrarysparse_general} for any $\beta\neq\beta_c(d)$ requires more refined analysis and recent correlation decay results of \cite{duminil2017sharp,duminil2018exponential} are crucial in this regard. However, applications of these recent results to our problem requires yet some more analysis. This is especially because \cite{duminil2017sharp,duminil2018exponential} works under the infinite volume weak limit of the model \eqref{eqn:lattice_ising_plus} and corresponding finite $n$ bounds (as we require) are not immediate. Deriving finite $n$ analogues of the correlation decay properties is involved and constitutes a major portion of our technical analysis. 
		In particular, we use standard results from FK-percolation theory obtained from the Edward-Sokal coupling of the Ising model with the Random Cluster Model (see \cite{grimmett2006random}) -- a background of which is provided in Section \ref{section:technical_backgroud}.
		
		\begin{remark}\label{remark:negative_boundary} We expect the detection thresholds presented in Proposition \ref{prop:lattice_suff} to also hold for the free boundary problem when $\beta>\beta_c(d).$ However, our proof crucially depends on the correlation decay property proved in Theorem \ref{thm:correlation_decay}. Although asymptotically a free boundary Ising model behaves like a mixture of positive and negative boundary ising models, we do not have GHS type inequality valid under negative boundary condition with positive magnetizations. Intuitively though a signal strength above the minimax separation rate should push the negative boundary condition towards a more positive boundary type model, we do not yet have a correct charactarization of this phenomenon and leave this to future efforts. 
			
		\end{remark}

		\subsubsection{\bf Behaviour at Critical  Temperatrure ($\beta=\beta_c(d)$)}\label{section:critical_lattice}
		
		Proposition~\ref{prop:lattice_suff} and Theorem~\ref{thm:correlation_decay} provide further evidence towards the philosophy put forward in  \cite{mukherjee2016global} -- i.e. even for non mean-field models (a substantially more challenging regime), away from critical temperature, the behavior of the detection problem \eqref{eqn:hypo_sparse} remains same as in the independent case (i.e. $\beta=0$). 
		The fact that at criticality ($\beta=\beta_c(d)$) one can detect lower signals (i.e. smaller in order $A$), is harder to prove rigorously. In this section, we provide a heuristic argument in a simpler subproblem which demonstrates this possible effect of criticality. To be more precise, we let $\bmu(h)=(\mu_i=h)_{i\in \Lambda_{n,d}}$ and consider the testing problem 
		\be 
		H_0: \bmu=\mathbf{0} \quad \text{vs} \quad H_1: \bmu=\bmu(h), h\geq A_n.
		\ee
		Note that this corresponds to the completely dense alternative $s=|\Lambda_{n,d}|$ in \eqref{eqn:hypo_sparse}. In this set up we provide statistical physics based heuristics below to demonstrate that a test based on rejecting $H_0$ for large values of $S_n=\sum_{i\in \Lambda_{n.d}}X_i$ can detect signals $A_n<<1/\sqrt{n}$ -- where $1/\sqrt{n}$ is shown to be a lower bound for testing at other temperatures in the previous subsection.
		
		Let us first consider the case $d \ge 4$. We claim that or any sequence $A_n$ such that  $A_n \to 0$ and $A_nn^{\frac{3(d-2)}{2d}} \to \infty$ such that the test based on rejecting for large values of $\sum_{i\in \Lambda_{n,d}}X_i$ is asymptotically powerful for testing \eqref{eqn:hypo_sparse} (note here that $\Xi(s,A)$ consists of a single element as $s  = n$ and $\mu_i \equiv h$). Observe that for $d \ge 4$, $\frac{3(d-2)}{2d} \in [3/4,3/2)$ which verifies the assertion that much smaller intensities are detectable at criticality.  The existence of such asymptotically powerful test for such a choice of $A_n$ immediately follows from the following (heuristic)  claim (and the fact that by GHS inequality (Lemma \ref{lemma:GHS}) $\Var_{\beta_c(d),\bQ(\Lambda_{n,d}),\boldsymbol 0)} (S_n   )\geq \Var_{\beta_c(d),\bQ(\Lambda_{n,d}),\bmu(h))} (S_n   )$ for any $h\geq 0$)
\begin{equation}
			\liminf_{n \to \infty} \frac{\E_{\beta_c(d),\bQ(\Lambda_{n,d}),\bmu(h)} (S_n  )}{\sqrt{\Var_{\beta_c(d),\bQ(\Lambda_{n,d}),\boldsymbol 0)} (S_n   )}}  = \infty. \label{ratio}
		\end{equation}
We now present a non-rigorous explanation for the above claim. The lower bound of the numerator follows from the following lower bound
\begin{equation}
\E_{\beta_c(d),\bQ(\Lambda_{n,d}),\bmu(A_n)} (S_n  ) \ge cnA_n^{1/3} \label{eq:numerator}
\end{equation}
for some constant $c>0$ independent of $n$.
This lower bound should in principle follow from the related lower bound for the mean of $X_0$ in the infinite volume setting, see Proposition 2.3 of \cite{duminil2016new} and also \cite{aizenman1987phase} (however we do not have a proof of this). On the other hand, it can be argued from classical infrared bounds (see for example Theorem 4.8 in  	\cite{duminil2017lectures}) that for some constant $c>0$
\begin{equation*}
\E_{\beta_c(d),\bQ(\Lambda_{n,d}),\boldsymbol 0} (X_i,X_j  )  \le \frac{c}{\|i-j\|^{d-2}}
\end{equation*}
from which it follows that $$\Var_{\beta_c(d),\bQ(\Lambda_{n,d}),\mathbf{0}} (S_n   )  \le cn^{1+2/d}.$$
Combining the above two results
$$
\liminf_{n \to \infty} \frac{\E_{\beta_c(d),\bQ(\Lambda_{n,d}),\bmu(h)} (S_n  )}{\sqrt{\Var_{\beta_c(d),\bQ(\Lambda_{n,d}),\boldsymbol 0} (S_n   )}}  \ge \liminf_{n \to \infty}  (A_n n^{\frac{3(d-2)}{2d}})^{1/3} =\infty
$$
by our choice of $A_n$.

 A rigorous proof of the above idea would require a finite volume analogue of the lower bound of the numerator \eqref{eq:numerator}. Ideally, one would want to extend Proposition 2.3 of \cite{duminil2016new} to the finite volume free boundary case. However, we believe this requires significant work which we do not pursue in this paper. It is worth mentioning here that with periodic boundary conditions (i.e. on a $d$-dimensional torus), it is an application of the work of \cite{AF86} that the numerator could be lower bounded by $cnA_n^{1/3}$. But unfortunately in that case, the required infrared bound for the denominator fails to hold. 
 
 \medskip

		We now turn to dimensions 2 and 3. For dimension 2, the discovery of Schramm Loewner Evolution (SLE) has sparked a series of works which provide us with rigorous proof of certain scaling exponents \cite{CDCHKS14,SLE}. For example, it is known that at criticality with zero external magnetic field, $S_n / n^{15/8}$ converges in law as $n \to \infty$ \citep{camia2015planar}. In dimension $3$, several exponents are conjectured to be true. Rather than surveying the literature on this, we present a non-rigorous discussion which explains why we believe that we can detect signals with intensity much lower than $n^{-1/2}$ at criticality in dimensions 2 and 3 as well and argue that the mean field case is the worst for detecting signals at criticality.  We refer to Section 3 of \cite{cardy} or  Chapter 9 of \cite{gri_perc} for more details. It is believed that the following two quantities should scale like
		$$
		\E_{\beta_c(d),\bQ(\Lambda_{n,d}),\bmu(h)} (X_0 ) \approx h^{1/\delta} \quad \quad  \quad \quad \sum_{j \in \Lambda_{n,d}}\E_{\beta_c(d),\bQ(\Lambda_{n,d}),\boldsymbol 0} (X_0 X_j)  \approx n^{\frac{2-\eta}{d}}
		$$ 
		Further, it is also believed that for $d \le d_c$ where $d_c$ is the upper critical dimension (the dimension above which all the exponents take their mean field values), there is an additional relation among these exponents called the \emph{hyperscaling relation}:
		$$
		2-\eta = d\frac{\delta -1}{\delta+1}.
		$$is approximately 
		\begin{equation*}
		\frac{ A_n^{1/\delta}n    }{  n^{\frac12 + \frac{2-\eta}{2d}}}  = A_n^{1/\delta} n^{\frac1{\delta+1}}.
		\end{equation*}
		Note that the right hand side diverges to infinity if $A_n \gg n^{-\frac{\delta}{\delta+1}} $. It is shown in \cite{aizenman1987phase} that $\delta$ satisfies the mean field lower bound $\delta \ge 3$. Using this we see that $n^{-\frac{\delta}{\delta+1} } \le n^{-3/4}$ and hence the mean field case is the worst for detecting signals at criticality.  This also hints that at criticality, one can detect lower values of signals for all $d \ge 2$ as opposed to the non-critical case.

		
		\section{Some Results for Two-Parameter Ising Models}\label{section:aux_result}
		In this section we collect results regarding inference in the classical two parameter Ising model -- which follow from similar proof ideas to those pursued in the rest of this paper. For a fixed pair of real numbers $(\beta,h)\in \mathbb{R}^+\times \mathbb{R}$ let $\bX\sim \P_{\beta,\bQ,\bmu(h)}$ where $\bmu(h)=(h,\ldots,h)\in \mathbb{R}^n$. Then \cite{ghosal2018joint} provide pseudo-likelihood estimates of $(\beta,h)$ and show joint $\sqrt{n}$-consistency of the estimates under certain classes of $\bQ$. Here we extend our previous results to show the optimality of their results in some of these situations. 
		
		\begin{theorem}\label{theorem:estimation}
			For any estimators $(\hat{\beta},\hat{h})$ of $(\beta,h)$ based on $\bX\sim \P_{\beta,\bQ,\bmu(h)}$ the following holds for has for any  $\beta^*$ and $0<\delta<\beta^*$ such that $\beta^*+\delta<(1-\rho)/\|\bQ\|_{\infty\rightarrow \infty}$ for some $0<\rho<1$.
			\be 
			\\ & sup_{\beta\in (\beta^*-\delta,\beta^*+\delta)\atop
				h\in \mathbb{R}}\E_{\beta,\bQ,\mathbf{0}}\left((\hat{\beta}-\beta)^2+(\hat{h}-h)^2\right)\\ 
			&\geq c\left(\frac{1}{\sum_{i,j=1}^n \bQ_{ij}^2}+\frac{1}{\mathrm{Var}_{\beta,\bQ,\bmu(0)}\left(\sum_{i=1}^n X_i\right)}\right)
			\ee
			for some constant $c>0$ depending on $\beta^*$.
			
			
			
		\end{theorem}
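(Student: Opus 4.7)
The plan is to establish the two summands in the lower bound separately by two independent applications of Le Cam's two-point method, one perturbing only the coupling parameter $\beta$ and one perturbing only the external field $h$, and then combining them. Specifically, to lower bound the $\beta$-risk I would compare $\P_{\beta^*,\bQ,\bmu(0)}$ against $\P_{\beta^*+\varepsilon,\bQ,\bmu(0)}$, and to lower bound the $h$-risk I would compare $\P_{\beta^*,\bQ,\bmu(0)}$ against $\P_{\beta^*,\bQ,\bmu(\eta)}$. In each case, if the Kullback--Leibler divergence between the two measures is bounded by a constant, a standard reduction (Le Cam's inequality) gives $\sup \E[(\hat\beta-\beta)^2]\gtrsim \varepsilon^2$ and $\sup \E[(\hat h-h)^2]\gtrsim \eta^2$, and the two lower bounds combine additively since they are taken at the same base point $(\beta^*,0)$.

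The next step is to compute the KL divergences using the exponential-family structure: with sufficient statistics $T_1(\bX)=\tfrac{1}{2}\bX^\top\bQ\bX$, $T_2(\bX)=\sum_{i}X_i$ and log-partition $\psi(\beta,h)$, a second-order Taylor expansion of $\psi$ yields $\mathrm{KL}(\P_{\theta_1}\|\P_{\theta_2})=\tfrac{1}{2}(\theta_1-\theta_2)^\top I(\tilde\theta)(\theta_1-\theta_2)$ for some $\tilde\theta$ on the segment joining $\theta_1,\theta_2$, where the Fisher information $I(\tilde\theta)$ is simply the covariance matrix of $(T_1,T_2)$ under $\P_{\tilde\theta}$. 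Hence the whole task reduces to producing uniform upper bounds, valid for all $\tilde\beta\in(\beta^*-\delta,\beta^*+\delta)$ and the intermediate $h$ values, of the form
\begin{equation*}
\mathrm{Var}_{\tilde\beta,\bQ,\bmu(0)}(\bX^\top\bQ\bX)\;\lesssim\;\sum_{i,j}\bQ_{ij}^2,\qquad \mathrm{Var}_{\tilde\beta,\bQ,\bmu(0)}\Bigl(\sum_i X_i\Bigr)\;\lesssim\;\mathrm{Var}_{\beta,\bQ,\bmu(0)}\Bigl(\sum_i X_i\Bigr),
\end{equation*}
with constants depending only on $\beta^*$ and $\rho$. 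The second bound is essentially tautological (up to constants arising from varying $\tilde\beta$), since $\sum_{i<j}\mathrm{Cov}(X_i,X_j)$ varies smoothly in $\beta$.

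The hard part will be the first variance bound. My plan is to exploit the Dobrushin-type hypothesis $(\beta^*+\delta)\|\bQ\|_{\infty\to\infty}\le 1-\rho$ in two ways. First, this condition implies a geometric contraction of the Gibbs sampler, which by a standard argument (going back to Dobrushin, Künsch, and Chatterjee) yields $\max_i\sum_j |\mathrm{Cov}_{\tilde\beta,\bQ,\mathbf{0}}(X_i,X_j)|\le C$, i.e.\ Condition~(D) holds uniformly in the parameter range. Second, expanding
$\mathrm{Var}(\bX^\top\bQ\bX)=\sum_{i,j,k,l}\bQ_{ij}\bQ_{kl}\,\mathrm{Cov}(X_iX_j,X_kX_l)$
and using the elementary decomposition of four-point covariances in terms of two-point ones (applied together with $|X_i|=1$), one reduces to a sum of the form $\sum_{i,j,k,l}|\bQ_{ij}||\bQ_{kl}|\bigl(|\mathrm{Cov}(X_i,X_k)|+|\mathrm{Cov}(X_i,X_l)|+\cdots\bigr)$, which by summing one covariance factor at a time yields the desired $O(\sum_{i,j}\bQ_{ij}^2)$ bound (using $\|\bQ\|_{\infty\to\infty}\le C$ to handle the remaining sums).

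Having established these variance bounds, choosing $\varepsilon^2\asymp 1/\sum_{i,j}\bQ_{ij}^2$ and $\eta^2\asymp 1/\mathrm{Var}_{\beta,\bQ,\bmu(0)}(\sum_i X_i)$ makes the relevant KL divergences $O(1)$, at which point Le Cam's lemma delivers the two desired lower bounds. Summing gives the theorem. The only real obstacle is the quartic variance bound, which is a standard-but-delicate exercise in high-temperature Ising analysis; all other steps are routine.
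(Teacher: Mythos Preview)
Your overall architecture matches the paper's: two separate Le Cam two-point comparisons, one perturbing $h$ at fixed $\beta=\beta^*$ and one perturbing $\beta$ at $h=0$, with the separations chosen so that the relevant information quantity stays $O(1)$. The paper controls the $\chi^2$-distance (i.e.\ $\E_{\beta^*,\bQ,\mathbf 0}(L^2)$) rather than the KL, but that is immaterial.

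There are, however, two gaps.

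\textbf{The $h$-part is not ``tautological''.} After perturbing only $h$, the intermediate point $\tilde\theta$ has $\tilde\beta=\beta^*$ but $\tilde h\in[0,\eta]$, so what you actually need is $\mathrm{Var}_{\beta^*,\bQ,\bmu(\tilde h)}(\sum_i X_i)\le C\,\mathrm{Var}_{\beta^*,\bQ,\bmu(0)}(\sum_i X_i)$. This is not automatic from smoothness in $\beta$ (which is not even varying). The paper obtains it in one line from the GHS inequality (Lemma~\ref{lemma:GHS}), which says the variance of the total spin is monotone nonincreasing in $h\ge 0$. You should invoke GHS here; otherwise this step is unjustified.

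\textbf{The $\beta$-part: the ``elementary decomposition'' does not give $\sum_{i,j}\bQ_{ij}^2$.} There is no general identity bounding $|\mathrm{Cov}(X_iX_j,X_kX_l)|$ by a sum of two-point covariances for Ising spins; the closest valid tool is the Dobrushin comparison inequality, which gives $|\mathrm{Cov}(X_iX_j,X_kX_l)|\lesssim R_{ik}+R_{il}+R_{jk}+R_{jl}$ with $R=(I-D)^{-1}$. But even granting a bound of that shape, your summation does not produce $\sum_{i,j}\bQ_{ij}^2$. Carrying out your own sketch,
\[
\sum_{i,j,k,l}|\bQ_{ij}||\bQ_{kl}||\mathrm{Cov}(X_i,X_k)|
\;\le\;\|\bQ\|_{\infty\to\infty}\sum_{i,j,k}|\bQ_{ij}||\mathrm{Cov}(X_i,X_k)|
\;\le\;C\|\bQ\|_{\infty\to\infty}\sum_{i,j}|\bQ_{ij}|,
\]
which is of order $n\|\bQ\|_{\infty\to\infty}^2$, not $\sum_{i,j}\bQ_{ij}^2$. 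These can differ drastically: in the Curie--Weiss model $\sum_{i,j}\bQ_{ij}^2\asymp 1$ while $n\|\bQ\|_{\infty\to\infty}^2\asymp n$, so your argument would yield only a $1/n$ lower bound on the $\beta$-risk instead of the constant-order bound the theorem asserts. The paper sidesteps this entirely by invoking \cite[Theorem~2.1]{gheissari2018concentration}, a concentration result for quadratic forms under the Dobrushin condition which directly gives $\mathrm{Var}_{\beta_1,\bQ,\mathbf 0}(\bX^\top\bQ\bX/2)\le C\sum_{i,j}\bQ_{ij}^2$ whenever $\beta_1\|\bQ\|_{\infty\to\infty}<1-\rho$. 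That result is proved via exchangeable pairs / Stein's method and is genuinely sharper than what a naive correlation-decay expansion yields; you should cite it (or reproduce its argument) rather than rely on the four-point reduction.
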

		We now comment on the result in Theorem \ref{theorem:estimation} in relation to recent results regarding inference in the two-parameter Ising Model $\P_{\beta,\bQ,\bmu(h)}$ \citep{ghosal2018joint,bhattacharya2015inference,chatterjee2007estimation}. 
		\begin{itemize}
			\item If  $\|\bQ\|_{\infty\rightarrow \infty}\leq C$ for some $C>0$ and 
			(precisely the condition (1.2)  of \cite{ghosal2018joint}) then we get a lower bound of mean squared error in estimation for jointly estimating $\beta,h$ as $\frac{1}{n}$. This is because in this case $\sum_{i,j=1}^n \bQ_{ij}^2\leq n\max_{ij}\bQ_{ij}^2\max_{i}\sum_{j=1}^n|\bQ_{ij}|\leq C^2n$. It also follows from \cite{ghosal2018joint} that if along with $\|\bQ\|_{\infty\rightarrow \infty}\leq C$ it further holds that  $\sum_{i,j=1}^n \bQ_{ij}^2\geq C'n$ for $C'>0$, then the pseudo-likelihood estimator of the pair $\beta,h$ has the usual parametric rate of convergence.  Therefore, in the case where $C''n\leq \sum_{i,j=1}^n \bQ_{ij}^2\leq C'n$ our result provides a matching lower bound for the result in  \cite[Theorem 1.5]{ghosal2018joint}. It is easy to check that this holds for Ising models on any connected bounded degree graphs.

			\item It is also established in \cite[Corollary 2.4]{bhattacharya2015inference} that an upper bound for the mean squared error rate of estimation of $\beta<1/\|\bQ\|_{\mathrm{op}}$ (where $\|\cdot\|_\mathrm{op}$ denotes the operator norm of a matrix) when $h=0$ is known is given by $1/\sum_{i,j=1}^n \bQ_{ij}^2$. Our Theorem matches this with a lower bound in a  sub-region (indeed by Gershgorin circle theorem
			 we have $1/\|\bQ\|_{\infty}\leq 1/\|\bQ\|_{\mathrm{op}}$).
			
			\item The term involving $\frac{1}{\mathrm{Var}_{\beta,\bQ,\bmu(0)}\left(\sum_{i=1}^n X_i\right)}$ in the statement of Theorem \ref{theorem:estimation} is also intuitive. In particular, $\mathrm{Var}_{\beta,\bQ,\bmu(0)}\left(\sum_{i=1}^n X_i\right)=C''n$ for some $C''>0$ when $\beta=0$ i.e. the independent spin case and in that case a $1/n$ mean squared error of convergence is obvious. Theorem \ref{theorem:estimation} extends this to general $\beta$. In particular, under condition (D) of Theorem \ref{theorem:arbitrarysparse_general}, since we have $\mathrm{Var}_{\beta,\bQ,\bmu(0)}\left(\sum_{i=1}^n X_i\right)\leq Cn$, we have that for these cases estimators of $h$ need to have mean squared error of order at least $1/n$. Consequently, in all the examples considered in Proposition \ref{prop_examples} the rate of convergence remains at least $1/n$.

		\end{itemize}
		
		
		

		
		\section{Technical Background}\label{section:technical_backgroud}
		In this section we discuss some relevant background on Ising models and collect some lemmas to be used in the proofs of the main results of this paper.

		\begin{lemma}[GHS Inequality \citep{lebowitz1974ghs}]\label{lemma:GHS}
			Suppose $X\sim \P_{\beta,\bQ,\bmu}$ with $\beta>0$, $\bQ_{ij}\geq 0$ for all $i,j$ and 
			$\bmu\in \left(\mathbb{R}^+\right)^{n}$. Then for any $(i_1,i_2,i_3)$ one has
			\be
			\frac{\partial^3\log{Z(\beta,\bQ,\bmu)}}{\partial \mu_{i_1}\partial \mu_{i_2}\partial \mu_{i_3}}\leq 0.
			\ee
			The inequality continues to hold for the measure $\P_{\beta,\bQ,\bmu}^+$ introduced in \eqref{eqn:lattice_ising_plus}.
		\end{lemma}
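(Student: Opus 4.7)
The plan is to recognize the third derivative of $\log Z$ as the third Ursell function (joint cumulant) of the spins $X_{i_1}, X_{i_2}, X_{i_3}$ under $\P_{\beta,\bQ,\bmu}$, and then to establish its non-positivity by the classical duplicated-system argument of Lebowitz. First I would verify by direct calculation that $\partial_{\mu_i} \log Z = \E_{\beta,\bQ,\bmu} X_i =: m_i$, $\partial^2_{\mu_i \mu_j} \log Z = \mathrm{Cov}_{\beta,\bQ,\bmu}(X_i, X_j)$, and that the third mixed partial equals the Ursell function $u_3(X_{i_1}, X_{i_2}, X_{i_3}) := \E\prod_{k=1}^3 (X_{i_k} - m_{i_k})$. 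The lemma therefore reduces to showing $u_3 \leq 0$ whenever $\beta \bQ_{ij} \geq 0$ and $\bmu \geq 0$ componentwise.

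The key tool is a two-replica change of coordinates. Let $\sigma, \sigma'$ be independent copies drawn from $\P_{\beta,\bQ,\bmu}$ and set $T_i = (\sigma_i + \sigma'_i)/2$, $S_i = (\sigma_i - \sigma'_i)/2$. Using $\sigma^\top \bQ \sigma + (\sigma')^\top \bQ \sigma' = 2(T^\top \bQ T + S^\top \bQ S)$ and $\sigma + \sigma' = 2T$, the joint law of $(S, T)$ has un-normalized density proportional to $\exp\bigl(\beta\, T^\top \bQ T + \beta\, S^\top \bQ S + 2\bmu^\top T\bigr)$ on the constrained state space $\{(S_i, T_i) \in \{(\pm 1, 0), (0, \pm 1)\}\}$. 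Crucially, the Hamiltonian decouples into two ferromagnetic Ising-type Hamiltonians: one on the $T$-variables with non-negative external field $2\bmu$, and one on the $S$-variables with zero field, coupled only through the hard-core constraint $|S_i| + |T_i| = 1$.

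Next I would expand $u_3$ using $X_{i_k} = T_{i_k} + S_{i_k}$ and $m_{i_k} = \E T_{i_k}$ (since $\E S_{i_k} = 0$ by the $\sigma \leftrightarrow \sigma'$ symmetry). Terms containing an odd number of $S$-factors vanish by the same symmetry, leaving
\be
u_3 = \E_{\mathrm{dup}}\!\left[\prod_{k=1}^3 (T_{i_k} - \E T_{i_k})\right] + \sum_{k=1}^3 \E_{\mathrm{dup}}\!\left[(T_{i_k} - \E T_{i_k}) \prod_{\ell \neq k} S_{i_\ell}\right].
\ee
To conclude, I would condition on the geometry $\eta := (|S_i|)_{i \in [n]}$: given $\eta$, the $S$-variables on $A(\eta) := \{i : \eta_i = 1\}$ form a zero-field ferromagnetic Ising system, while the $T$-variables on $A(\eta)^c$ form a ferromagnetic Ising system with non-negative external field $2\bmu$, and these two subsystems are independent under the conditional measure. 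GKS-II (Griffiths' second inequality) then controls the signs of the conditional correlators on each subsystem, and assembling these with the correct Lebowitz rearrangement produces an overall non-positive sign for the right-hand side above.

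The main obstacle is the sign accounting in the final step: the surviving terms in the expansion of $u_3$ do not all have obviously definite signs individually, and one must group them so that each surviving group can be written as a product of non-negative GKS correlations on the decoupled $S$- and $T$-subsystems, with an overall negative prefactor. Both hypotheses, $\beta\bQ \geq 0$ and $\bmu \geq 0$, enter precisely here: ferromagnetism is needed for GKS-II on both subsystems, and non-negativity of $\bmu$ is needed to control the sign of the $T$-magnetizations. For the $+$ boundary measure $\P^+_{\beta, \bQ, \bmu}$ introduced in \eqref{eqn:lattice_ising_plus}, fixing $X_j = +1$ on the boundary is equivalent to taking the limit of effective fields $\mu_j \to +\infty$, which preserves both ferromagneticity and the non-negativity of the effective external field, so the same argument applies verbatim to give the last assertion of the lemma.
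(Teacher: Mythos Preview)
The paper does not prove this lemma; it is stated with a citation to \cite{lebowitz1974ghs} and used as a black box throughout. Your sketch is precisely the duplicated-spin method of that reference, so there is no ``paper's proof'' to compare against and you are in effect reproducing the cited argument.

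One caution on the step you yourself flag as the main obstacle: the $\eta$-conditioning you describe does not by itself close the argument. After conditioning on $\eta=(|S_i|)_i$, the mixed term $\E_{\mathrm{dup}}\bigl[(T_{i_k}-\E T_{i_k})\prod_{\ell\neq k}S_{i_\ell}\bigr]$ factors (on the event $i_k\in A(\eta)^c$, $i_\ell\in A(\eta)$) into $\bigl(\E[T_{i_k}\mid\eta]-\E T_{i_k}\bigr)\cdot\E\bigl[\prod_{\ell\neq k}S_{i_\ell}\,\big|\,\eta\bigr]$, and the first factor can have either sign, so GKS on the decoupled subsystems is not enough. In Lebowitz's actual proof the work is packaged instead into the three correlation inequalities $\langle q_A q_B\rangle\ge\langle q_A\rangle\langle q_B\rangle$, $\langle t_A t_B\rangle\ge\langle t_A\rangle\langle t_B\rangle$, and crucially the mixed one $\langle q_A t_B\rangle\le\langle q_A\rangle\langle t_B\rangle$; GHS then follows from these by a short algebraic identity. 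Your $\eta$-conditioning recovers the first two but not the mixed inequality, which is exactly what handles the cross terms. The extension to $\P^+_{\beta,\bQ,\bmu}$ via sending $\mu_j\to+\infty$ on the boundary is correct.
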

		
		We briefly discuss the significance of Lemma \ref{lemma:GHS}. First note that for any $i_1,i_2$, one has by standard theory of exponential family of distributions, 
		\be
		\frac{\partial^2\log{Z(\beta,\bQ,\bmu)}}{\partial \mu_{i_1}\partial \mu_{i_2}}=\mathrm{Cov}_{\beta,\bQ,\bmu}(X_i,X_j).
		\ee
		Consequently, Lemma \ref{lemma:GHS} implies that for any $\bmu_1\succcurlyeq\bmu_2\succcurlyeq\mathbf{0}$ (where $\succcurlyeq$ denotes coordinate-wise $\geq$ inequality between two vectors) one has
		\be
		\mathrm{Cov}_{\beta,\bQ,\bmu_1}(X_i,X_j)&\leq \mathrm{Cov}_{\beta,\bQ,\bmu_2}(X_i,X_j),\label{eqn:correlation_ordering}
		\ee
		whenever $\beta\bQ_{ij}\geq 0$ for all $i,j$.
		
		\begin{lemma}[GKS Inequality \citep{friedli2017statistical}]\label{lemma:GKS}
			Suppose $X\sim \P_{\beta,\bQ,\bmu}$ with $\beta>0$, $\bQ_{ij}\geq 0$ for all $i,j$ and $\bmu\in \left(\mathbb{R}^+\right)^{n}$. Then the following hold for any $i,j$
			\be
			\mathrm{Cov}_{\beta,\bQ,\bmu}(X_i,X_j)\geq 0; \quad \E_{\beta,\bQ,\bmu}(X_i)\geq 0.
			\ee
			The inequality continues to hold for the measure $\P_{\beta,\bQ,\bmu}^+$ introduced in \eqref{eqn:lattice_ising_plus}.
		\end{lemma}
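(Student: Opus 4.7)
The plan is to prove the classical Griffiths--Kelly--Sherman (GKS) inequalities by exploiting the nonnegativity of all coefficients appearing in the Hamiltonian $H(\bx) := \frac{\beta}{2}\bx^\top \bQ \bx + \bmu^\top \bx$ under the ferromagnetic and positive external field assumptions. My first (and main) step is to establish the stronger \emph{first} GKS inequality $\E_{\beta,\bQ,\bmu}(X_A) \geq 0$ for every $A\subseteq [n]$, where $X_A := \prod_{i\in A} X_i$. The non-negativity of $\E_{\beta,\bQ,\bmu}(X_i)$ follows by specializing to $A=\{i\}$. To carry this out, I would write
\[
\E_{\beta,\bQ,\bmu}(X_A) = \frac{1}{Z(\beta,\bQ,\bmu)} \sum_{\bx\in\{\pm 1\}^n} x_A \sum_{k=0}^\infty \frac{H(\bx)^k}{k!}
\]
and observe that $H(\bx)^k$ expands into a polynomial in $x_1,\ldots,x_n$ with nonnegative coefficients (since $\beta\bQ_{ij}\geq 0$ and $\mu_i\geq 0$). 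Multiplying by $x_A$ yields another such polynomial. Now the summation $\sum_{\bx\in\{\pm 1\}^n} \prod_i x_i^{n_i}$ equals $2^n$ when every $n_i$ is even and $0$ otherwise, so the full double sum is a nonnegative combination of nonnegative terms.

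For the covariance statement, I would use the standard duplicate variable trick. Let $\bX,\bX'$ be two independent draws from $\P_{\beta,\bQ,\bmu}$, and write
\[
\mathrm{Cov}_{\beta,\bQ,\bmu}(X_i, X_j) = \tfrac{1}{2}\,\E\bigl[(X_i - X'_i)(X_j - X'_j)\bigr].
\]
Change variables to $\tau_k := (X_k + X'_k)/2 \in \{-1,0,1\}$ and $\eta_k := (X_k - X'_k)/2 \in \{-1,0,1\}$, where $\tau_k\eta_k = 0$ and $\tau_k^2+\eta_k^2=1$. The joint Hamiltonian $H(\bX)+H(\bX')$ re-expresses as $H(\bm\tau+\bm\eta) + H(\bm\tau-\bm\eta)$, whose polynomial expansion in $(\bm\tau,\bm\eta)$ retains only even powers of $\bm\eta$ (odd powers cancel) and hence has nonnegative coefficients. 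Since the quantity of interest is $2\,\E[\eta_i\eta_j]$, the same parity/nonnegativity argument used in the first step applies: for each monomial $\prod_k \tau_k^{a_k}\eta_k^{b_k}$ arising in the series expansion, the sum over the allowed values of $(\tau_k,\eta_k)$ is nonnegative (it is a product of local sums, each of which equals $0$, $2$, or a positive integer depending on the parities of $a_k,b_k$), so the total is $\geq 0$.

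For the plus boundary measure $\P^+_{\beta,\bQ,\bmu}$ appearing in \eqref{eqn:lattice_ising_plus}, conditioning the boundary spins to $+1$ is equivalent to running the Ising model on the interior vertices with a modified external field $\tilde\mu_i = \mu_i + \beta\sum_{j\in\partial\Lambda_{n,d}} \bQ_{ij}$ for each interior vertex $i$. Under the ferromagnetic assumption $\beta\bQ_{ij}\geq 0$, the shift is nonnegative, so $\tilde{\bmu}\in(\mathbb{R}^+)^n$ and the interaction matrix is unchanged; the arguments above therefore apply verbatim. The main technical delicacy will be Step~2, specifically verifying that the constraint $\tau_k\eta_k=0$ is automatically respected by the parity bookkeeping so that one genuinely obtains a sum of nonnegative monomials after the change of variables -- this is a standard but notation-heavy combinatorial check which I would perform by factorizing the sum over $(\tau_k,\eta_k)$ coordinate-by-coordinate.
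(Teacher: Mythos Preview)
Your proposal is correct and follows the classical textbook route to the GKS inequalities (Griffiths' expansion argument for the first inequality and Ginibre's duplicate-variable trick for the second). The paper, however, does not prove this lemma at all: it is stated as a quotation from the literature (the citation \cite{friedli2017statistical}) and used as a black box throughout. So there is nothing to compare against---you have supplied a full proof where the paper supplies only a reference.

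A couple of minor remarks on your write-up. In Step~2 your computation can be made more explicit: since $H(\bx)+H(\bx')=\beta\,\bm\tau^\top\bQ\bm\tau+\beta\,\bm\eta^\top\bQ\bm\eta+2\bmu^\top\bm\tau$, the cross terms vanish exactly and there is no need to appeal to a general ``even in $\bm\eta$'' cancellation; all three blocks already have nonnegative coefficients. The coordinate-by-coordinate sum over the four admissible pairs $(\tau_k,\eta_k)\in\{(1,0),(-1,0),(0,1),(0,-1)\}$ of any monomial $\tau_k^{a_k}\eta_k^{b_k}$ is $0$ if $a_k b_k>0$, and otherwise equals $1+(-1)^{a_k}$ or $1+(-1)^{b_k}$ or $4$, each of which is $\geq 0$; this disposes of the ``technical delicacy'' you flag. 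Your treatment of the plus-boundary measure is also correct: freezing boundary spins at $+1$ is exactly an Ising model on the interior with shifted (still nonnegative) external field, so both inequalities carry over verbatim.
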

		
		\begin{lemma}[Griffith's Second Inequality \citep{friedli2017statistical}]\label{lemma:griffith_second}
			Suppose $X^{(k)}\sim \P_{\beta^{(k)},\bQ^{(k)},\mathbf{0}}$ for  $k=1,2$ with $\beta^{(1)}\bQ\beta^{(1)}{ij}\geq \beta^{(2)}\bQ\beta^{(2)}{ij}\geq 0$ for all $i,j$. 
			Then
			\be
			\mathrm{Cov}_{\beta^{(1)},\bQ^{(1)},\mathbf{0}}(X_i,X_j)\geq \mathrm{Cov}_{\beta^{(2)},\bQ^{(2)},\mathbf{0}}(X_i,X_j), \quad \forall i,j.
			\ee
		\end{lemma}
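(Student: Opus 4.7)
The plan is to derive the monotonicity of two-point functions in the couplings from the multi-site form of Griffith's second inequality (GKS~II) together with a derivative-and-integrate argument along a linear interpolation between the two coupling matrices. Write $J^{(k)}_{ij}:=\beta^{(k)}\bQ^{(k)}_{ij}$ so that the hypothesis becomes $J^{(1)}_{ij}\geq J^{(2)}_{ij}\geq 0$, and consider the path $J(t):=(1-t)J^{(2)}+tJ^{(1)}$ for $t\in[0,1]$; at each $t$ the coupling matrix $J(t)$ remains ferromagnetic. Let $\langle\cdot\rangle_t$ denote expectation under the Ising measure \eqref{eqn:general_ising} with couplings $J(t)$ and $\bmu=\mathbf{0}$.

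First I would differentiate $\langle X_iX_j\rangle_t$ in $t$. The standard exponential-family calculation gives
\[
\frac{d}{dt}\langle X_iX_j\rangle_t \;=\; \sum_{k<\ell}\bigl(J^{(1)}_{k\ell}-J^{(2)}_{k\ell}\bigr)\Bigl[\langle X_iX_jX_kX_\ell\rangle_t-\langle X_iX_j\rangle_t\langle X_kX_\ell\rangle_t\Bigr].
\]
Because $\bmu=\mathbf{0}$ makes the measure invariant under $\bx\mapsto-\bx$, all odd-order correlations vanish; in particular $\langle X_i\rangle_t\equiv0$, so $\langle X_iX_j\rangle_t=\mathrm{Cov}_t(X_i,X_j)$. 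It therefore suffices to show that each bracket above is nonnegative, i.e.\ to establish the multi-site GKS~II inequality
\[
\langle X_AX_B\rangle_t-\langle X_A\rangle_t\langle X_B\rangle_t\;\geq\;0, \qquad X_A:=\prod_{i\in A}X_i,
\]
for arbitrary $A,B\subseteq[n]$, and then to specialize to $A=\{i,j\}$, $B=\{k,\ell\}$.

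For the GKS~II step I would use Ginibre's two-replica trick. Introduce an independent copy $Y\sim\langle\cdot\rangle_t$ and use the symmetric identity $\langle X_AX_B\rangle-\langle X_A\rangle\langle X_B\rangle=\tfrac12\langle(X_A-Y_A)(X_B-Y_B)\rangle_{XY}$. Next change variables to $\sigma_i:=\tfrac12(X_i+Y_i)$ and $\tau_i:=\tfrac12(X_i-Y_i)$, both valued in $\{-1,0,1\}$ with $\sigma_i\tau_i=0$. A direct computation gives $X_kX_\ell+Y_kY_\ell=2(\sigma_k\sigma_\ell+\tau_k\tau_\ell)$, so the joint Boltzmann weight becomes $\exp\!\bigl(2\sum_{k<\ell}J_{k\ell}(\sigma_k\sigma_\ell+\tau_k\tau_\ell)\bigr)$, whose Taylor series has nonnegative coefficients by ferromagnetism. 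Meanwhile $X_A-Y_A$ and $X_B-Y_B$ each expand into a sum of monomials in $\sigma,\tau$ with nonnegative integer coefficients, and every single-site moment $\E[\sigma_i^{a_i}\tau_i^{b_i}]$ is nonnegative (it vanishes unless exactly one of $a_i,b_i$ is nonzero and even). Hence the entire expression is a nonnegative combination of nonnegative terms, proving GKS~II; integrating the nonnegative derivative from $t=0$ to $t=1$ then yields the lemma.

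The only delicate step is the combinatorial bookkeeping behind GKS~II, but this is classical and is written out in detail in Chapter~3 of \cite{friedli2017statistical}, the same reference already invoked for Lemma~\ref{lemma:GKS}; the substantive content of the present lemma is really the derivative-plus-integrate scheme above. The ferromagnetic hypothesis is used in exactly two places: to ensure $J(t)\geq0$ coordinate-wise so that GKS~II applies at each intermediate $t$, and to guarantee that the multiplicative factor $J^{(1)}_{k\ell}-J^{(2)}_{k\ell}$ in the integrand is nonnegative.
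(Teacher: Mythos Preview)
The paper does not actually prove this lemma; it is simply quoted as a classical result from \cite{friedli2017statistical}. Your argument is correct and is precisely the standard proof that appears in that reference (Chapter~3): interpolate linearly between the two sets of ferromagnetic couplings, differentiate the two-point function, and use the multi-site GKS~II inequality (established via Ginibre's duplicate-variable trick with the $(\sigma,\tau)$ change of variables) to show the derivative is nonnegative. There is nothing to compare here beyond noting that you have supplied what the paper merely cites.
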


		\begin{lemma}[FKG Inequality \citep{friedli2017statistical}]\label{lemma:FKG}
			Suppose $X\sim \P_{\beta,\bQ,\bmu}$ with $\beta>0$, $\bQ_{ij}\geq 0$ for all $i,j$ and $\bmu\in \left(\mathbb{R}^+\right)^{n}$. Suppose $f,g$ are two non-increasing functions of $\bx$ (i.e. whenever $\bx\succcurlyeq\bx'$ one has $f(\bx)\geq f(\bx')$ and $g(\bx)\geq g(\bx')$). Then 
			\be 
			\E_{\beta,\bQ,\bmu}\left(f(\bX)g(\bX)\right)\geq \E_{\beta,\bQ,\bmu}\left(f(\bX)\right)\E_{\beta,\bQ,\bmu}\left(g(\bX)\right).
			\ee
			The inequality continues to hold for the measure $\P_{\beta,\bQ,\bmu}^+$ introduced in \eqref{eqn:lattice_ising_plus}.
		\end{lemma}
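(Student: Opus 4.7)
The plan is to reduce Lemma~\ref{lemma:FKG} to the classical Fortuin--Kasteleyn--Ginibre (Holley) theorem by verifying the (strong) FKG lattice condition for the Ising density on the distributive lattice $\{-1,+1\}^n$ under the coordinate-wise partial order. Concretely, I would establish log-supermodularity:
$$\P_{\beta,\bQ,\bmu}(\bX=\bx \vee \by)\,\P_{\beta,\bQ,\bmu}(\bX=\bx \wedge \by) \;\geq\; \P_{\beta,\bQ,\bmu}(\bX=\bx)\,\P_{\beta,\bQ,\bmu}(\bX=\by), \qquad \forall\,\bx,\by \in \{-1,+1\}^n,$$
where $\vee,\wedge$ act coordinate-wise. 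The normalizer $Z$ cancels on taking logs, and since $(\bx \vee \by) + (\bx \wedge \by) = \bx + \by$ the linear term $\bmu^\top(\cdot)$ cancels identically. What remains is the quadratic inequality
$$\sum_{i,j}\beta \bQ_{ij}\Bigl[(x_i \vee y_i)(x_j \vee y_j) + (x_i \wedge y_i)(x_j \wedge y_j) - x_i x_j - y_i y_j\Bigr] \;\geq\; 0.$$

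Since ferromagnetism gives $\beta \bQ_{ij} \geq 0$, it suffices to check each bracket is nonnegative. Using $a \vee b = \tfrac12(a+b+|a-b|)$ and $a \wedge b = \tfrac12(a+b-|a-b|)$, the bracket simplifies to $\tfrac12\bigl(|x_i - y_i|\,|x_j - y_j| - (x_i - y_i)(x_j - y_j)\bigr)$, which is nonnegative since $|u||v| \geq uv$ for any reals (indeed one can also just tabulate the four sign patterns for $(x_i - y_i, x_j - y_j) \in \{-2,0,2\}^2$). With the lattice condition and positivity of the density in hand, Holley's theorem (e.g.\ Chapter~3 of \cite{friedli2017statistical}, or Theorem 2.16 of \cite{grimmett2006random}) immediately yields positive association for any two monotone functions of the same direction. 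The lemma's hypothesis ``$\bx \succcurlyeq \bx' \Rightarrow f(\bx) \geq f(\bx')$'' is in fact monotone non-decreasing; in any case replacing $(f, g)$ by $(-f, -g)$ preserves the conclusion $\E[fg] \geq \E[f]\,\E[g]$, so the stated inequality follows.

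For the plus-boundary measure $\P^{+}_{\beta,\bQ,\bmu}$, conditioning on boundary spins equal to $+1$ leaves an Ising-Gibbs measure on the interior with unchanged interior--interior couplings $\beta \bQ_{ij}$ and augmented local fields $\mu_i + \beta \sum_{j \in \partial \Lambda_{n,d}} \bQ_{ij} \geq 0$; the log-supermodularity computation is identical. No serious obstacle arises here---this is a textbook result. The only minor care point relative to the more familiar $\{0,1\}$-valued setting is the explicit $\pm 1$ computation for the supermodularity bracket, which I have sketched above.
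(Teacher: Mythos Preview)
Your proof is correct and is the standard textbook argument: verify the FKG lattice condition (log-supermodularity) for the ferromagnetic Ising density and invoke Holley's theorem. The paper itself does not supply a proof of this lemma---it is stated in Section~\ref{section:technical_backgroud} as background with a citation to \cite{friedli2017statistical}---so there is nothing to compare against beyond noting that your argument is precisely the one found in that reference. Your handling of the $\pm1$ supermodularity bracket, the observation that the lemma's ``non-increasing'' wording actually describes non-decreasing functions (harmless, as you note), and the reduction of $\P^{+}$ to an interior Ising measure with shifted nonnegative fields are all fine.
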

		
		\begin{lemma}\label{lemma:expectation_vs_externalmag}
			Suppose $X\sim \P_{\beta,\bQ,\bmu}$  where $(\beta,\bQ)$ is Ferromagnetic satisfying condition (D) and $\bmu\in (\mathbb{R}^+)^n$ is such that $\|\bmu\|_{\infty}\leq M$ for some $M>0$. Then there exists $c_1,c_2>0$ (depending on $C,M$) such that
			
			\be
			c_1\mu_i\leq \E_{\beta,\bQ,\bmu}(X_i)\leq c_2 \mu_i+C\|\bmu\|_{\infty},\quad \forall i.
			\ee
			
		\end{lemma}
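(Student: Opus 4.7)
The plan is to write $\E_{\beta,\bQ,\bmu}(X_i)$ as a path integral from the zero field to $\bmu$, identify the integrand with covariances, and then apply the GHS (Lemma \ref{lemma:GHS}) and GKS (Lemma \ref{lemma:GKS}) inequalities to extract the upper and lower bounds respectively. Since the null measure $\P_{\beta,\bQ,\mathbf{0}}$ is invariant under $\bX \mapsto -\bX$, one has $\E_{\beta,\bQ,\mathbf{0}}(X_i) = 0$, and the fundamental theorem of calculus applied to the log-partition function gives
\begin{equation*}
\E_{\beta,\bQ,\bmu}(X_i) \;=\; \int_0^1 \sum_{j=1}^n \mu_j\, \mathrm{Cov}_{\beta,\bQ,t\bmu}(X_i, X_j)\, dt.
\end{equation*}

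For the upper bound I would invoke \eqref{eqn:correlation_ordering}: since $t\bmu \succcurlyeq \mathbf{0}$ for $t\in[0,1]$, GHS gives $\mathrm{Cov}_{\beta,\bQ,t\bmu}(X_i,X_j) \le \mathrm{Cov}_{\beta,\bQ,\mathbf{0}}(X_i,X_j)$, and by GKS these covariances are non-negative. Separating the $j=i$ term and using $\mathrm{Var}_{\beta,\bQ,\mathbf{0}}(X_i) \le 1$ for the diagonal and the covariance-sum part of Condition (D) for the off-diagonal contribution yields
\begin{equation*}
\E_{\beta,\bQ,\bmu}(X_i) \;\le\; \mu_i \cdot 1 \;+\; \|\bmu\|_\infty \sum_{j\ne i}\mathrm{Cov}_{\beta,\bQ,\mathbf{0}}(X_i,X_j) \;\le\; \mu_i + C\|\bmu\|_\infty,
\end{equation*}
so $c_2 = 1$ works.

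For the lower bound I would use GKS nonnegativity to drop every term in the $j$-sum except $j=i$, giving
\begin{equation*}
\E_{\beta,\bQ,\bmu}(X_i) \;\ge\; \mu_i \int_0^1 \mathrm{Var}_{\beta,\bQ,t\bmu}(X_i)\, dt.
\end{equation*}
The remaining task, and the one delicate step, is a uniform positive lower bound on $\mathrm{Var}_{\beta,\bQ,t\bmu}(X_i)$ along the path. Here I would use the Ising conditional distribution $\E_{\beta,\bQ,t\bmu}(X_i \mid X_{-i}) = \tanh\bigl(\beta \sum_j \bQ_{ij} X_j + t\mu_i\bigr)$. Since $\bQ_{ij}\ge 0$ (ferromagnetism), $|X_j|\le 1$, $\|\bQ\|_{\infty\to\infty}\le C$, and $\|t\bmu\|_\infty \le M$, the argument of tanh lies in $[-(\beta C+M),\beta C+M]$, forcing $|\E_{\beta,\bQ,t\bmu}(X_i)| \le \tanh(\beta C+M) < 1$ after taking the outer expectation. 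Consequently $\mathrm{Var}_{\beta,\bQ,t\bmu}(X_i) \ge 1 - \tanh^2(\beta C+M) > 0$ uniformly in $t$, and this constant serves as $c_1$.

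The main obstacle is precisely this uniform variance bound: without the $\|\bQ\|_{\infty\to\infty}\le C$ half of Condition (D) the conditional mean could approach $\pm 1$ along the path and the variance could degenerate, breaking the linear lower bound in $\mu_i$. Ferromagnetism is also indispensable here, both to ensure the tanh argument is controlled via $\bQ_{ij}\ge 0$ and to validate the GHS/GKS inequalities that drive both directions.
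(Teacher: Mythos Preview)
Your argument is correct and follows essentially the same route as the paper: write the magnetization as an integral (the paper uses the mean value theorem at a single intermediate point $\bmu^*$, you use the full path integral), then apply GHS and Condition~(D) for the upper bound and GKS plus a uniform lower bound on $\mathrm{Var}(X_i)$ for the lower bound.

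The only substantive difference is in how the variance lower bound is obtained. You bound the conditional mean via $|\tanh(\beta\sum_j \bQ_{ij}X_j + t\mu_i)| \le \tanh(|\beta|C+M)$ using the $\|\bQ\|_{\infty\to\infty}\le C$ part of Condition~(D), which yields $c_1 = 1-\tanh^2(|\beta|C+M)$ depending on $\beta$ as well as $C,M$. The paper instead argues by cases: if $\E_{\beta,\bQ,\bmu}(X_i)>1/2$ the bound is trivial with $c_1=1/(2M)$, and otherwise GKS monotonicity gives $\E_{\beta,\bQ,\bmu^*}(X_i)\le \E_{\beta,\bQ,\bmu}(X_i)\le 1/2$, hence $\mathrm{Var}\ge 3/4$. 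This gives $c_1=\min\{1/(2M),3/4\}$, free of $\beta$ and $C$, matching the lemma's stated dependence more precisely. Your approach is more direct and avoids the case split, at the cost of the extra $\beta$ dependence in the constant---harmless in every application in the paper, but worth noting against the lemma as stated.
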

		
		\begin{proof}
			Note that
			\be
			\E_{\beta,\bQ,\bmu}(X_i)&= \E_{\beta,\bQ,\mathbf{0}}(X_i)+\sum_{j=1}^n \mu_j\mathrm{Cov}_{\beta,\bQ,\bmu^*}(X_i,X_j)
			\ee
			for some $\bmu^*$ lying on the line joining $\mathbf{0}$ and $\bmu$. Now note that  by GHS inequality (allowed by assumption 1 and $\bmu\in (\mathbb{R}^+)^n$)
			\be
			\sum_{j=1}^n \mu_j\mathrm{Cov}_{\beta,\bQ,\bmu^*}(X_i,X_j)&\leq \sum_{j=1}^n \mu_j\mathrm{Cov}_{\beta,\bQ,\mathbf{0}}(X_i,X_j)\\
			&\leq \mu_i\mathrm{Var}_{\beta,\bQ,\mathbf{0}}(X_i)+\max_{j\neq i}\mu_j\sum_{j\neq i}\mathrm{Cov}_{\beta,\bQ,\mathbf{0}}(X_i,X_j).
			\ee 
			
			Consequently the upper bound follows by assumption 2. For the lower bound note that we can safely assume $\E_{\beta,\bQ,\bmu}(X_i)\leq \frac{1}{2}$ because o.w. the result is true with $c=2M$. with note that by GKS inequality (allowed by assumption 1 and $\bmu\in (\mathbb{R}^+)^n$) we have $\E^2_{\beta,\bQ,\bmu^*}(X_i)\leq \E^2_{\beta,\bQ,\bmu}(X_i)$ and consequently
			\be
			\sum_{j=1}^n \mu_j\mathrm{Cov}_{\beta,\bQ,\bmu^*}(X_i,X_j)&\geq \mu_i\mathrm{Var}_{\beta,\bQ,\bmu^*}(X_i)\\
			&=\mu_i(1-\E^2_{\beta,\bQ,\bmu^*}(X_i))\geq \mu_i\left(1-\frac{1}{4}\right).
			\ee
			Therefore we have $\E_{\beta,\bQ,\bmu}\geq \min\{1/2M,3/4\}\mu_i$.	
		\end{proof}


		Our results also depend on the Edward-Sokal coupling between the Ising Model and the Random Cluster Model \cite{grimmett2006random}. To elaborate on this, consider $\bQ_{ij}=\I(0<\|i-j\|_1\leq L)$ as the adjacency matrix of a labelled undirected graph $\mathbb{G}_n$ with vertices $V(\mathbb{G}_n)=\Lambda_{n,d}$ and edges $E(\mathbb{G}_n)=\{(i,j):\bQ_{ij}\neq 0\}$. Note that $\mathbb{G}_n$ can be viewed as finite subgraph of finite range interaction graph of the lattice $\mathbb{Z}^d$ by considering $\bQ$ to be the restriction to $\Lambda_{n,d}$ of an infinite weighting matrix for pairs of vertices on the infinite lattice. Denote this infinite matrix as $\bQ(\mathbb{Z}^d)$ and the corresponding infinite graph by $\mathbb{G}=(V(\mathbb{G})=\mathbb{Z}^d,E(\mathbb{G}))$ where $E(\mathbb{G})=\{(i,j):\bQ_{ij}(\mathbb{Z}^d)\neq 0\}$. Also, for any two $i,j\in \mathbb{Z}^d$, let $d(i,j)$ be the graph distance between $i,j$ in $\mathbb{G}$ \textit{i.e.} $d(i,j)$ is the length of the shortest path between $i,j$ in $\mathbb{G}$. In case of nearest neighbor graph on $\mathbb{Z}^d$ one has $d(i,j)=\|i-j\|_1$. For any subgraph $H=(V(H),E(H))$ of $\mathbb{G}$ let $\partial H$ denote the set of all $i\in V(H)$ such that there exists $j\notin V(H)$ with $(i,j)\in E$. 
		
		With the notation above, a percolation configuration $\omega(H)=(\omega_{ij})_{(i,j)\in E(H)}$ over any subgraph $H=(V(H),E(H))$ of $\mathbb{G}$ is an element of $\{0,1\}^{E(H)}$. Any such percolation configuration $\omega(H)$ can be seen as a subgraph of $H$ with vertex set $V(H)$ and edge-set given by $\{(i,j)\in E(H): \omega_{ij}=1 \}$. Also, for any partition $\xi=P_1\cup \ldots P_l$ of $\partial H$, let $\omega^{\xi}(H)$ be obtained from $\omega(H)$ by contracting all vertices in each partitioning set $P_t,t=1,\ldots,m$ into one vertex and let $k(\omega^{\xi}(H))$ be the number of connected connected components of $\omega^{\xi}(H)$ (i.e. if two separated connected components in $\omega(H)$ include two separate vertices of a single $P_t,t=1,\ldots,l$, they are considered as a single component). Any such partition $\xi$ is referred to as a boundary condition and the Random Cluster Model on $H$ with $\xi$ boundary condition and parameters $(p,q)$ is a probability measure on percolation configurations $\omega(H)$ with p.m.f. $\phi^{\xi}_{p,q,H}$ defined as follows
		\be
		\phi_{p,q,H}^{\xi}(\omega(H))=\frac{p^{o(\omega(H))}(1-p)^{|E(H)|-o(\omega(H))}q^{k(\omega^{\xi}(H))}}{Z_{p,q,H}^{\xi}},
		\ee
		where $o(\omega(H))=\{(i,j)\in E(H): \omega_{ij}=1\}$ is referred to as the number of open edges of $\omega(H)$ and $Z_{p,q,H}^{\xi}$ is the normalizing constant. Incontext of the measure $\phi_{p,q,H}^{\xi}(\omega(H))$, we shall refer to $\xi=\cup_{l\in \partial H}\{\{l\}\}$ as the free boundary condition and $\xi=\{\partial H\}$ as the wired boundary condition. The following lemma collects some fundamental properties of the Random Cluster Model and connects it to the Ising Model \eqref{eqn:general_ising}. 
		
		\begin{lemma}[\cite{duminil2017lectures}]\label{lemma:random_cluster_model}
			Consider any subgraph $H=(V(H),E(H))$ of $\mathbb{G}$ and consider $\phi_{p,q,H}^{\xi}$ on $H$ with boundary condition $\xi$ and parameters $p>0$, $q\geq 1$. Let $\{i\stackrel{H}{\leftrightarrow}j\}$ denotes the event that $i,j$ are connected with edges in the random subgraph of $H$ obtained from $\omega(H)$.  
			\begin{enumerate}
				\item \textbf{Comparison Between Boundary Conditions:} Fix $i,j\in V(H)$ and suppose $\xi_1$ and $\xi_2$ are any two boundary conditions such that $\xi_1$ is a coarser partition than $\xi_2$. Then
				\be
				\phi_{p,2,H}^{\xi_2}(i\stackrel{H}{\leftrightarrow} j)&\leq \phi_{p,2,H}^{\xi_1}(i\stackrel{H}{\leftrightarrow} j).
				\ee 
				
				\item \textbf{Domain Markov Property:} Consider any subgraph $H'=(V(H'),E(H'))$ of $H$ and any boundary condition $\xi$ on $\partial H$. If $\psi'\in \{0,1\} ^{E(H')}$ and $\psi\in \{0,1\}^{E(H)\setminus E(H')}$ be any two configurations, then
				\be
				\phi^{\xi}_{p,q,H}\left(\omega(H)\vert_{E(H')}=\psi'\vert\omega(H)\vert_{E(H)\setminus E(H')}=\psi\right)&=\phi_{p,Q,H'}^{\psi^{\xi}}(\psi'),
				\ee
				where $\psi^{\xi}$ is the boundary condition on $\partial H'$ obtained by putting any two vertices in $\partial H'$ in the same partition when they are connected through the configuration $\psi$ and original boundary condition $\psi$. 
				\item \textbf{Edward-Sokal Coupling:} Suppose $\beta=-\frac{1}{2}\log{(1-p)}$ and let $\xi=\cup_{l\in \partial H}\{\{l\}\}$ and $\xi^{'}=\{\partial H\}$. Then 
				\be
			\ &	\phi_{p,2,H}^{\xi}(i\stackrel{H}{\leftrightarrow} j)=\E_{\beta,\bQ,\mathbf{0}}(X_iX_j),\\
				\ &  \phi_{p,2,H}^{\xi'}(i\stackrel{H}{\leftrightarrow} j)=\E_{\beta,\bQ,\mathbf{0}}^{+}(X_iX_j); \quad  \phi_{p,2,H}^{\xi'}(i\stackrel{H}{\leftrightarrow} \partial H)=\E_{\beta,\bQ,\mathbf{0}}^{+}(X_i).
				\ee
			\end{enumerate}
		\end{lemma}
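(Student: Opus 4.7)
The three parts of Lemma~\ref{lemma:random_cluster_model} are classical facts about the $q=2$ random cluster model, so the plan is to assemble well-known arguments rather than invent new machinery. The key tools are the FKG lattice inequality for random cluster measures (valid because $q=2\geq 1$), a direct algebraic manipulation of conditional weights for the Markov property, and an explicit joint construction for the Edward--Sokal coupling. Throughout, I would fix the parametrization $p=1-e^{-2\beta}$ so that $q=2$ random cluster and Ising parameters match.

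For the \emph{comparison between boundary conditions}, the plan is to exploit the observation that a coarser partition $\xi_1$ on $\partial H$ can be obtained from a finer one $\xi_2$ by successively merging two boundary classes at a time. It therefore suffices to prove monotonicity under a single merge, and to do this I would realize both measures as marginals on a common probability space by adjoining virtual ``ghost edges'' among the boundary vertices in each merged class and taking $p\to 1$ for those ghost edges. Equivalently, one notes that merging two boundary classes corresponds to \emph{conditioning} the random cluster measure on these two boundary vertices being connected through the external environment; for $q\geq 1$ this conditioning is an increasing event and the event $\{i\stackrel{H}{\leftrightarrow}j\}$ is itself increasing in $\omega(H)$, so the FKG inequality applied to $\phi^{\xi_2}_{p,2,H}$ yields the stated inequality. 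I would cite \cite{grimmett2006random} for the FKG inequality on random cluster measures.

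For the \emph{Domain Markov Property}, the plan is a direct computation from the definition of $\phi^{\xi}_{p,q,H}$. Write the weight for a joint configuration $(\psi',\psi)$ on $E(H')\cup (E(H)\setminus E(H'))$ as
\begin{equation*}
p^{o(\psi')+o(\psi)}(1-p)^{|E(H')|+|E(H)\setminus E(H')|-o(\psi')-o(\psi)}\,q^{k((\psi'\cup\psi)^{\xi})}.
\end{equation*}
The first two factors split multiplicatively, and the cluster count $k((\psi'\cup\psi)^{\xi})$ depends on $\psi'$ only through how it links up boundary vertices of $H'$, which are exactly the connections encoded in the boundary condition $\psi^{\xi}$ on $\partial H'$ inherited from $\psi$ and from $\xi$. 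Dividing by the marginal of $\psi$ cancels all factors that do not depend on $\psi'$ and exhibits the conditional law as $\phi^{\psi^\xi}_{p,q,H'}$; the normalization falls out automatically.

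For the \emph{Edward--Sokal coupling}, I would construct on the same probability space a spin configuration $\bX\in\{\pm 1\}^{V(H)}$ and an edge configuration $\omega(H)\in\{0,1\}^{E(H)}$ whose joint density is proportional to $\prod_{(i,j)\in E(H)}\bigl((1-p)\,\mathbf{1}(\omega_{ij}=0)+p\,\mathbf{1}(\omega_{ij}=1,X_i=X_j)\bigr)$, under free boundary, and with the corresponding wired version obtained by further forcing $X_i\equiv+1$ on $\partial H$. A standard summation shows that the $\bX$-marginal is the Ising model $\P_{\beta,\bQ,\mathbf{0}}$ (respectively $\P^{+}_{\beta,\bQ,\mathbf{0}}$) while the $\omega$-marginal is $\phi^{\xi}_{p,2,H}$ (respectively $\phi^{\xi'}_{p,2,H}$). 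Conditional on $\omega(H)$, spins are constant within clusters and i.i.d.\ uniform across clusters (or $+1$ on the cluster of $\partial H$ under wired boundary); therefore $\E(X_iX_j\mid\omega)=\mathbf{1}(i\stackrel{H}{\leftrightarrow}j)$ in the free case, and $\E^{+}(X_iX_j\mid\omega)=1$ on $\{i\stackrel{H}{\leftrightarrow}j\}\cup\{i,j\stackrel{H}{\leftrightarrow}\partial H\}$ in the wired case, yielding the three stated identities after taking expectations.

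The main technical subtlety I expect is bookkeeping for the wired case in part~3: because spins on the cluster of $\partial H$ are deterministically $+1$, one must separately track the events ``$i,j$ both connect to $\partial H$'' and ``$i,j$ in a common bulk cluster'', and show that their union has probability $\phi^{\xi'}_{p,2,H}(i\stackrel{H}{\leftrightarrow}j)$ under the wired random cluster convention (where $\partial H$ is a single contracted vertex). Once this identification is made, all three identities follow uniformly.
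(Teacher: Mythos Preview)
Your proposal is correct and follows the standard textbook route (FKG monotonicity for part~1, direct algebraic factorization for part~2, and the explicit Edwards--Sokal joint measure for part~3). The paper, however, does not prove this lemma at all: it is stated in Section~\ref{section:technical_backgroud} as a background result with a citation to \cite{duminil2017lectures} and is used as a black box in the subsequent proofs. So there is no ``paper's own proof'' to compare against; what you have written is essentially the argument one would find in the cited lecture notes or in \cite{grimmett2006random}.

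One small remark on your part~3 bookkeeping: under the wired measure $\phi^{\xi'}_{p,2,H}$, the event $\{i\stackrel{H}{\leftrightarrow}j\}$ is computed in the graph with $\partial H$ contracted to a single vertex, so the event $\{i,j\text{ both connect to }\partial H\}$ is already a sub-event of $\{i\stackrel{H}{\leftrightarrow}j\}$ and no separate union is needed. With that convention fixed, the conditional expectation $\E^{+}(X_iX_j\mid\omega)$ is exactly the indicator of $\{i\stackrel{H}{\leftrightarrow}j\}$ (it equals $1$ there and $0$ on the complement, since on the complement at least one of the two clusters carries a uniform $\pm 1$ spin independent of the other), and the identity follows by taking expectations. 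Your write-up already flags this as the only subtlety, so this is just a matter of phrasing.
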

		In the rest of the paper we shall work with $q=2$ (which corresponds to the Edward Sokal coupling of the Ising Model with the Random Cluster Model) and simply denote it by $\phi^{\xi}_{p,H}$ for boundary conditions $\xi$, subgraph $H$ (to be clear from context) and $p$ satisfying $\beta=-\frac{1}{2}\log{(1-p)}$.

		\section{Proofs of Main Results:}\label{section:proofs_main} 
		
		\subsection{Proof of Theorem \ref{theorem:arbitrarysparse_general}}

		Note that we can assume $s\gtrsim \sqrt{n}$ since otherwise the condition $s\tanh(A\gg \sqrt{n}$ cannot hold. To show that a test based on rejecting for large  values of $\sum_{i=1}^n X_i$. Note that by Condition (D), one has that $$\mathrm{Var}_{\beta,\bQ,\mathbf{0}}(\sum_{i=1}^n X_i)\leq Cn.$$ Consequently, the test that rejects whenever $\sum_{i=1}^n X_i\geq L_n\sqrt{n}$ for some sequence $L_n\rightarrow \infty$ has Type I error going to $0$. For the type II error, note that for any $\bmu\in (\mathbb{R}^+)^n$ one has by GHS inequality (Lemma \ref{lemma:GHS}) one has 
		$ 
		\mathrm{Var}_{\beta,\bQ,\bmu}(\sum_{i=1}^n X_i)\leq \mathrm{Var}_{\beta,\bQ,\mathbf{0}}(\sum_{i=1}^n X_i)\leq Cn. 
		$
		So it is enough to show that $\E_{\beta,\bQ,\bmu}(\sum_{i=1}^n X_i)\gg L_n\sqrt{n}$ for $\bmu\in \Xi(s,A)$ with $s\tanh(A)\gtrsim \sqrt{n}$. Fix $M>1$ and note that by Lemma \ref{lemma:expectation_vs_externalmag} one has with $\bmu^*(M)=(\min(\mu_i,M))$ that $$\E_{\beta,\bQ,\bmu}(\sum_{i=1}^n X_i)\geq \E_{\beta,\bQ,\bmu^*(M)}(\sum_{i=1}^n X_i)\gtrsim \sum_{i=1}^n \min(\mu_i,M) \gtrsim  s\tanh(A)\gg L_n\sqrt{n}$$ by choosing $L_n=\left(\sqrt{n}/s\tanh(A)\right)^{1/2}$ which diverges since $s\tanh(A)\gtrsim \sqrt{n}$. Note that in the last line we have used the fact that $\mu_i\geq A\geq \tanh(A)$ for nonzero $\mu_i$.

		It was also argued in \cite{mukherjee2016global} that this requirement on $A$ is rate optimal if the following holds for $\tanh(A)=c\sqrt{n}/s$ for some small constant $c>0$, any fixed $x>0$, and any sequence $x_n \to \infty$,

		\be
		\liminf_{n \to \infty}\P_{\beta,\bQ,\mathbf{0}} (  \sum_{i =1 }^n (X_i - \E_{\beta,\bQ,\mathbf{0}}(X_i) ) >x\sqrt{n} ) > 0,  \label{eq:null_large}\\
		\limsup_{n \to \infty} \sup_{\bmu\in\tilde\Xi(A,s)}  \P_{\beta,\bQ,\bmu} (  (\sum_{i =1 }^nX_i -  \E_{\beta,\bQ,\mathbf{0}}(X_i) ) ) >x_n \sqrt{n} ) = 0,  \label{eq:alt_small}
		\ee
		with $\tilde{\Xi}(s,A):=\left\{\bmu\in \mathbb{R}^{n}:|{\rm support}(\bmu)|= s,{\rm \ and\ } \min_{i\in {\rm support}(\bmu)}\mu_i= A>0\right\}$. Note that $\E_{\beta,\bQ,\mathbf{0}}(X_i)=0$ and we keep this in the statements to demonstrate the fact that the proof goes through even for the $+$-boundary problem considered in Section \ref{section:latice}.
		
		The first requirement \eqref{eq:null_large} pertains to exact $\sqrt{n}$-order of fluctuation of the total magnetization $\sum_{i=1}^nX_i$ and can be argued if a central limit theorem exists for the total magnetization $\sum_{i=1}^nX_i-\E_{\beta,\bQ,\mathbf{0}}(X_i)$. However, since we have a general Ising model, such a CLT is not available to us. \textcolor{black}{Instead we prove \eqref{eq:alt_small} first and derive \eqref{eq:null_large} using certain facts proved along the way.}
		
		Proving \eqref{eq:alt_small} can be subtle since it requires precise understanding of the fluctuations of the total magnetization under the alternative. To address this, we reduced our problem to a question about how the correlation between $X_i$ and $X_j$ under $H_0$ and conclude the proof using Condition (D).

		Note that it is enough to prove that for any $\bmu\in \tilde{\Xi}(s,A)$
		\be
		|\E_{\beta,\bQ,\bmu}(\sum_{i =1 }^nX_i)-\E_{\beta,\bQ,\mathbf{0}}\sum_{i =1 }^nX_i)|&\leq Cs\tanh(A),\label{eqn:non_centrality}\\
		\mathrm{Var}_{\beta,\bQ,\bmu}(\sum_{i =1 }^nX_i)&\leq Cn \label{eqn:variance}
		\ee
		for some constant $C>0$. We divide our proof in two parts depending on $s\gtrsim \sqrt{n}$ and $s\ll \sqrt{n}$.
		
		\subsubsection{Case 1: $s\gtrsim \sqrt{n}$} Since $s\gtrsim \sqrt{n}$ we can assume that any $\bmu\in \tilde{\Xi}(s,A)$ has $\|\bmu\|_{\infty}\leq M$ for some $M>0$ and consequently, $|\mu_j|\leq C\tanh(|\mu_j|)$ for some $C$ depending on $M$.

		To prove the first inequality \eqref{eqn:non_centrality}, we note that by property of exponential family of distributions and the mean value theorem 
		\be 
		\ &|\E_{\beta,\bQ,\bmu}(\sum_{i =1 }^nX_i)-\E_{\beta,\bQ,\mathbf{0}}(\sum_{i =1 }^nX_i)|\\&=|\sum_{i=1}^n \left(\frac{\partial}{\partial\mu_i}\log{Z(\beta,\bQ,\bmu)}-\frac{\partial}{\partial\mu_i}\log{Z(\beta,\bQ,\bmu)}\vert_{\bmu=\mathbf{0}}\right)|\\
		&=|\sum_{i=1}^n\sum_{j=1}^n\mu_j\frac{\partial^2}{\partial\mu_i\mu_j}\log{Z(\beta,\bQ,\bmu)}\vert_{\bmu=\bmu^*}|
		\ee
		where $\bmu^*$ lies on the line segment joining $\bmu$ and $\mathbf{0}$. Now, once again using properties of exponential family of models followed by GHS inequality (Lemma \ref{lemma:GHS}) we have
		\be 
		|\sum_{i=1}^n\sum_{j=1}^n\mu_j\frac{\partial^2}{\partial\mu_i\mu_j}\log{Z(\beta,\bQ,\bmu)}\vert_{\bmu=\bmu^*}|
		&=|\sum_{j=1}^n\mu_j\sum_{i=1}^n\mathrm{Cov}_{\beta,\bQ,\bmu^*}(X_i,X_j)|\\ &\leq |\sum_{j=1}^n\mu_j\sum_{i=1}^n\mathrm{Cov}_{\beta,\bQ,\mathbf{0}}(X_i,X_j)|\leq C\sum_{j=1}^n \mu_j
		\ee
		for some constant $C>0$ depending on $\beta$. The second last inequality uses the fact that covariances are positive (GKS Inequality in Lemma \ref{lemma:GKS}) and the fact that $\mu_j\geq 0$ for all $j\in \Lambda_{n,d}$. The last inequality is valid using Condition (D) and the fact that $\mu_j\geq 0$ for all $j\in \Lambda_{n,d}$. This concludes the verification of \ref{eq:alt_small}. 
		
		The verification of \eqref{eqn:variance} is also immediate since by GHS inequality $$\mathrm{Var}_{\beta,\bQ,\bmu}(\sum_{i =1 }^nX_i)=\sum_{i,j=1 }^n\mathrm{Cov}_{\beta,\bQ,\bmu}(X_i,X_i)\leq \sum_{i,j=1}^n\mathrm{Cov}_{\beta,\bQ,\mathbf{0}}(X_i,X_i)$$ and therefore the bound on the variance of $Cn$ for some constant $C>0$ depending on $\beta,d$ is obvious using Condition (D).

		It is clear that, by symmetry,
		\begin{equation}
		\label{eq:eq00}
		\P_{\beta,\mathbf{Q},\mathbf{0}}\Big(\Big|\sum_{i =1 }^n X_i\Big|>K\sqrt{n}\Big)=2\P_{\beta,\mathbf{Q},\mathbf{0}}\Big(\sum_{i =1 }^n X_i> K\sqrt{n}\Big).
		\end{equation}
		In establishing \eqref{eq:alt_small}, we essentially proved that
		\begin{equation}
		\label{eq:eq02}
		\limsup_{K\rightarrow\infty}\limsup_{n\rightarrow\infty}\sup_{\bmu\in \tilde{\Xi}(s,B)}\P_{\beta,\mathbf{Q},\bmu}\left(\sum_{i =1 }^nX_i>K\sqrt{n}\right)=0.
		\end{equation}
		By choosing $K$ large enough, we can make the right hand side of \eqref{eq:eq00} less than $1/2$. This gives 
		\be\label{eq:re-write}
		\sum_{{\bf x}\in \{-1,1\}^n}e^{\beta{\bf x}^\top\mathbf{Q}{\bf x}/2}\le 2\sum_{{\bf x}\in D_{n,K}}e^{\beta{\bf x}^\top\mathbf{Q}{\bf x}/2},
		\ee
		where  $D_{n,K}:=\Big\{|\sum_{i =1 }^n X_i|\le K\sqrt{n}\Big\}$. Then, setting $C_n:=\{\sum_{i =1 }^nX_i>\lambda \sqrt{n}\}$, for any $K>\lambda$ we have 
		\be
		\P_{\mathbf{Q},\mathbf{0}}(C_n)
		\ge \P_{\mathbf{Q},\mathbf{0}}(C_n\cap D_{n,K})
		=&\frac{\sum_{{\bf x}\in C_n\cap D_{n,K}} e^{\beta{\bf x}'\mathbf{Q}{\bf x}/2}}{\sum_{{\bf x}\in \{-1,1\}^n} e^{\beta{\bf x}'\mathbf{Q}{\bf x}/2}}\\
		\ge &\frac{1}{2}\frac{\sum_{{\bf x}\in C_n\cap D_{n,K}} e^{\beta{\bf x}'\mathbf{Q}{\bf x}/2}}{\sum_{{\bf x}\in D_{n,K}} e^{\beta{\bf x}'\mathbf{Q}{\bf x}/2}}\\
		\ge &\frac{e^{-2Kt}}{2}\frac{\sum_{{\bf x}\in C_n\cap D_{n,K}} e^{\beta{\bf x}'\mathbf{Q}{\bf x}/2+\frac{t}{\sqrt{n}}\sum_{i=1}^nx_i}}{\sum_{{\bf x}\in D_{n,K}} e^{\beta{\bf x}'\mathbf{Q}{\bf x}/2}}\\
		=&\frac{e^{-2Kt}}{2}\frac{\P_{\beta,\mathbf{Q},\bmu(t)}(C_n\cap D_{n,K})}{\P_{\beta,\mathbf{Q},\mathbf{0}}( D_{n,K})}\frac{Z(\beta,\mathbf{Q},\bmu(t))}{Z(\beta,\mathbf{Q},{\bf 0})}\\
		\ge &\frac{e^{-2Kt}}{2}\P_{\beta,\mathbf{Q},\bmu(t)}(C_n\cap D_{n,K}),
		\ee
		where $\bmu(t)=tn^{-1/2}{\bf 1}$. In the last inequality we use the fact that the function $t\mapsto Z(\beta,\mathbf{Q},\bmu(t))$ is non-increasing in $t$ on $[0,\infty)$, as
		$$\frac{\partial }{\partial t}Z(\beta,\mathbf{Q},\bmu(t))=\frac{1}{\sqrt{n}}\E_{\beta,\mathbf{Q},\bmu(t)}\sum_{i=1}^nX_i\ge \frac{1}{\sqrt{n}}\E_{\beta,\mathbf{Q},{\bf 0}}\sum_{i=1}^nX_i=0. $$
		To show \eqref{eq:null_large}, it thus suffices to show that there exists $K$ large enough and $t>0$ such that
		$$
		\liminf_{n\rightarrow\infty}\P_{\beta,\mathbf{Q},\bmu(t)}(C_n\cap D_{n,K})>0.$$
		To this end, it suffices to show that for any $\lambda>0$ there exists $t$ such that
		\begin{equation}
		\label{eq:eq01}
		\liminf_{n\rightarrow\infty}\P_{\beta,{\mathbf{Q}},\bmu(t)}(\sum_{i =1 }^nX_i>\lambda \sqrt{n})>0.
		\end{equation}
		If \eqref{eq:eq01} holds, then there exists $t>0$ such that 
		$$\liminf_{n\rightarrow\infty}\P_{\beta,\mathbf{Q},\bmu(t)}(C_n)>0.$$
		It now suffices to show that for any $t$ fixed one has
		\be
		\limsup_{K\rightarrow\infty}\limsup_{n\rightarrow\infty}\P_{\beta,\mathbf{Q},\bmu(t)}(D_{n,K}^c)=0,
		\ee
		which follows from \eqref{eq:eq02}.
		
		It now remains to show \eqref{eq:eq01}. To begin, note that for $h>0$ and $m_i(\bX)=\beta\sum_{j=1}^n\mathbf{Q}_{ij}X_j$
		\begin{eqnarray*}
			\E_{\beta,{\mathbf{Q}},\bmu(h)} X_i&=&\E_{\beta,{\mathbf{Q}},\bmu(h)} \tanh\left(m_i(\bX)+\frac{h}{\sqrt{n}}\right)\\
			&=&\E_{\beta,{\mathbf{Q}},\bmu(h)}\frac{\tanh(m_i(\bX))+\tanh\left(\frac{h}{\sqrt{n}}\right)}{1+\tanh(m_i(\bX))\tanh\left(\frac{h}{\sqrt{n}}\right)}\\
			&\ge& \frac{1}{2}\left[\E_{\beta,{\mathbf{Q}},\bmu(h)} \tanh(m_i(\bX))+\tanh\left(\frac{h}{\sqrt{n}}\right)\right]\\
			&\ge& \frac{1}{2}\tanh\left(\frac{h}{\sqrt{n}}\right).
		\end{eqnarray*}
		In the last inequality we use Holley inequality  \citep[e.g., Theorem 2.1 of][]{grimmett2006random} for the two probability measures $\P_{\beta,\mathbf{Q},\mathbf{0}}$ and $\P_{\beta,\mathbf{Q},\bmu(h)}$ to conclude $$\E_{\beta,\mathbf{Q},\bmu(h)}\tanh(m_i(\bX)\ge \E_{\beta,\mathbf{Q},\mathbf{0}}\tanh(m_i(\bX))=0,$$
		in the light of (2.7) of \cite{grimmett2006random}. Adding over $1\le i\le n$ gives
		\be\label{eq:mean_estimate}
		F_n'(h)=\frac{1}{\sqrt{n}}\E_{\beta,{\mathbf{Q},\bmu(h)} }\sum_{i =1 }^n X_i\ge \frac{\sqrt{n}}{2}\tanh\left(\frac{h}{\sqrt{n}}\right),
		\ee
		where $F_n(h)$ is the log normalizing constant for the model $\P_{\beta,\mathbf{Q},\bmu(h)}$. 
		Thus, using  Markov's inequality one gets
		\be
		\P_{\beta,\mathbf{Q},\bmu(t)}\left(\sum_{i =1 }^nX_i\le \lambda \sqrt{n}\right)
		=&\P_{\beta,\mathbf{Q},\bmu(t)}\left(e^{- \frac{1}{\sqrt{n}}\sum_{i =1 }^nX_i}\ge e^{- \lambda}\right)
		\\&\le \exp\left\{ \lambda+F_n(t- 1)-F_n(t)\right\},
		\ee
		Using \eqref{eq:mean_estimate}, the exponent in the rightmost hand side can be estimated as
		\be
		\lambda +F_n(t- 1)-F_n(t)
		= \lambda -\int_{t-1}^{t} F_n'(h)dh\le  \lambda-\frac{\sqrt{n}}{2}\tanh\left(\frac{t-1}{\sqrt{n}}\right),
		\ee
		which is negative and uniformly bounded away from $0$ for all $n$ large for $t=4\lambda+1$, from which \eqref{eq:eq01} follows.
		
		\subsubsection{Case 1: $s\ll \sqrt{n}$} 	In this case $s\tanh(A)\le C\sqrt{n}$ implies $s\le C'\sqrt{n}$, where $C':=C/\tanh(1)$. Also, since the statistic $\sum_{i=1}^nX_i$ is stochastically non-decreasing in $A$, without loss of generality it suffices to show that, for a fixed $S\subset[n]$ obeying $|S|=s$,
		\be\label{eqn:limit2}
		\limsup_{K\rightarrow\infty}\limsup_{n\rightarrow\infty}\limsup_{A\rightarrow\infty}\sup_{\bmu\in \tilde{\Xi}(s,A):\atop \mathrm{supp}(\bmu)=S}\P_{\beta,\mathbf{Q},\bmu}\left\{\sum_{i\in S^c}X_i>K\sqrt{n}\right\}=0.
		\ee
		Now, for $i\in S$ we have, using the fact that $\|\mathbf{Q}\|_{\ell_\infty\to\ell_\infty}\leq C$, for $\bmu \in \tilde{\Xi}(s,A)$
		\be
		\P_{\beta,\mathbf{Q},\bmu}(X_i=1|X_j&=x_j,j\ne i)=\frac{e^{A+m_i(\mathbf{x})}}{e^{A+m_i(\mathbf{x})}+e^{-A-m_i(\mathbf{x})}}\\
		&=\frac{1}{1+e^{-2m_i(\mathbf{x})-2A}}
		\ge \frac{1}{1+e^{2C-2A}},
		\ee
		and so $\lim_{A\rightarrow\infty}\P_{\beta,\mathbf{Q},\bmu}(X_i=1,i\in S)=1$ uniformly in $\bmu \in \tilde{\Xi}(s,A)$ with  $s\leq C'\sqrt{n}$.
		Also note that for any configuration $(x_j,j\in S^c)$ we have
		\be
		\P_{\beta,\mathbf{Q},\bmu}(X_i=x_i,i\in S^c|X_i=1,i\in S)&\propto \exp\left(\frac{\beta}{2}\sum_{i,j\in S^c}x_ix_j\bQ_{ij}+\sum_{i\in S^c} x_i\tilde{\mu}_{S,i}\right), \label{eqn:conditional_distribution}
		\ee
		where $\tilde{\mu}_{S,i}:=\sum_{j\in S}\bQ_{ij}\le \|\mathbf{Q}\|_{\ell_\infty\to\ell_\infty}$. Further we have
		\be
		\sum_{i=1}^n\tilde{\bmu}_{S,i}=\sum_{i=1}^n\sum_{j\in S}\bQ_{ij}=\sum_{j\in S}\sum_{i=1}^n\bQ_{ij}\le C'\|\mathbf{Q}\|_{\ell_\infty\to\ell_\infty} \sqrt{n}. \label{eqn:mu_tilde}
		\ee
		
		Thereafter repeating the argument of the case $s\gtrsim\sqrt{n}$ with the conditional measure \ref{eqn:conditional_distribution}. More precisely, since the conditional measure \ref{eqn:conditional_distribution} is an Ising model where the total external magnetization $\sum_{i=1}^n\tilde{\bmu}_{S,i}$ does not exceed $O(\sqrt{n})$ and by Griffith's Second Inequality (Lemma \ref{lemma:griffith_second})  and GHS Inequality (Lemma \ref{lemma:GHS}) the pairwise covariance between spins in the conditional measure \ref{eqn:conditional_distribution} is smaller than that in the actual measure $\P_{\beta,\bQ,\mathbf{0}}$ -- one has similar control over $\sum_{i\in S^c}X_i$ under the conditional measure as obtained before for $\sum_{i=1}^n X_i$ in the case $s\gtrsim\sqrt{n}$. This completes the proof.
		


		\subsection{Proof of Proposition \ref{prop_examples}}
		\begin{enumerate}
			\item Note that by Griffith's Second Inequality (Lemma \ref{lemma:griffith_second}) it is enough to prove the result for the complete graph i.e. $\bQ_{ij}=\frac{1}{n}\mathbf{1}(i\neq j)$. In this regard, 	using\cite[lemma 3]{mukherjee2016global} we know that there exists a random variable $T_n\sim N(\bar{X},1/n)$ such that given $T_n=t_n$ we have $(X_1,\cdots,X_n)$ i.i.d. with
			$$\P_{\beta,\bQ,\mathbf{0}}(X_i=x_i|T_n=t_n)=\frac{e^{\beta t_n x_i}}{e^{\beta t_n}+e^{-\beta t_n}}.$$
			This immediately gives
			$$\E_{\beta,\bQ,\mathbf{0}}(X_iX_j)=\E_{\beta,0}\tanh(\beta T_n)^2\le \beta^2\E_{\beta,\bQ,\mathbf{0}}(T_n^2).$$
			Also using the proof of \cite[lemma 3]{mukherjee2016global} we have that $T_n$ has density proportional to $e^{-nf(t)}$, where one has for $0<\beta<1$ that $$\lambda_1 \frac{t^2}{2}\le f(t)=\beta \frac{t^2}{2}-\log\cosh(\beta t)\le \lambda_2\frac{t^2}{2}$$
			for constants $\lambda_1,\lambda_2$ depending on $\beta$.
			This gives
			\begin{align*}
			\E_{\beta,\bQ,\mathbf{0}}T_n^2\le \frac{\int_\R t^2 e^{-n\lambda_1 t^2/2}dt}{\int_\R e^{-n\lambda_2 t^2/2}dt}=\frac{(\lambda_2)^{1/2}}{n(\lambda_1)^{3/2}},
			\end{align*}
			from which it follows that
			\be 
			\mathrm{Cov}_{\beta,\bQ,\mathbf{0}}(X_i,X_j) = \E_{\beta,\bQ,\mathbf{0}}(X_iX_j)\leq \beta^2\frac{(\lambda_2)^{1/2}}{n(\lambda_1)^{3/2}}.
			\ee
			Consequently, 
			\be 
			\max_{i}\sum_{j=1}^n\mathrm{Cov}_{\beta,\bQ,\mathbf{0}}(X_i,X_j)\leq \beta^2\frac{(\lambda_2)^{1/2}}{(\lambda_1)^{3/2}}.
			\ee
			This completes the proof.
			
			\item Note that (see e.g, \cite[equation 30]{anandkumar2011high}) $\mathrm{Cov}_{\beta,\bQ,\mathbf{0}}=\|\P_{\beta,\bQ,\mathbf{0}}(X_i|X_j=1)-\P_{\beta,\bQ,\mathbf{0}}(X_i|X_j=-1)\|_{TV}$ i.e. the total variational distance between the conditional distribution of $X_i|X_j=1$ and conditional distribution of $X_i|X_j=-1$. To bound the total variation distance note that we have from \cite[Lemma 3.4]{wu2013learning} we have the following. If the graph distance between $i,j$ is $l$ then for  $x_i\in \{-1,+1\}$
			\be 
			|\P_{\beta,\bQ,\mathbf{0}}(X_i=x_i|X_j=1)-\P_{\beta,\bQ,\mathbf{0}}(X_i=x_i|X_j=-1)|
			&\leq \gamma \delta^l
			\ee
			where $\gamma=\frac{4\beta}{(k-1)\tanh(\beta/k)}$ and $\delta=(k-1)\tanh(\beta/k)$. The result follows since the number of vertices at a distance $l$ is at most $k(k-1)^{l-1}\leq 2(k-1)^{l}$ for $k\geq 2$.
			
			\item Note that for a rooted regular tree with degree $k$, for any vertices $i,j$ such that the graph distance between $i,j$ equals $l$, $\mathrm{Cov}_{\beta,\bQ,\mathbf{0}}=\left(\tanh(\beta/k)\right)^l$. This can be easily proved using a recursive argument -- see e.g. \cite{anandkumar2011high}. The result follows since the number of vertices at a distance $l$ is at most $k(k-1)^{l-1}\leq 2(k-1)^{l}$ for $k\geq 2$.
			
		\end{enumerate}

		\subsection{Proof of Proposition \ref{prop_erdos_renyi}}
		
		\begin{enumerate}
			\item Note that we can assume $s\gtrsim \sqrt{n}$ since otherwise the condition $s\tanh(A\gg \sqrt{n}$ cannot hold. We show that a test based on rejecting for large  values of $\sum_{i=1}^n X_i$. First note that, even though Condition (D) does not hold here, we have by \cite{giardina2015quenched}  $$\mathrm{Var}_{\beta,\bQ,\mathbf{0}}(\sum_{i=1}^n X_i)\leq Cn$$ \textcolor{black}{with high probability w.r.t. the randomness of $\G_n$}.
			Consequently, the test that rejects whenever $\sum_{i=1}^n X_i\geq L_n\sqrt{n}$ for some sequence $L_n\rightarrow \infty$ has Type I error going to $0$. For the type II error, note that for any $\bmu\in (\mathbb{R}^+)^n$ one has by GHS inequality (Lemma \ref{lemma:GHS}) that
			$ 
			\mathrm{Var}_{\beta,\bQ,\bmu}(\sum_{i=1}^n X_i)\leq \mathrm{Var}_{\beta,\bQ,\mathbf{0}}(\sum_{i=1}^n X_i)\leq Cn. 
			$
			So it is enough to show that $\E_{\beta,\bQ,\bmu}(\sum_{i=1}^n X_i)\gg L_n\sqrt{n}$ for $\bmu\in \Xi(s,A)$ with $s\tanh(A)\gtrsim \sqrt{n}$. Fix $M>1$ and note that by Lemma \ref{lemma:expectation_vs_externalmag} one has with $\bmu^*(M)=(\min(\mu_i,M))$ that \be
			\E_{\beta,\bQ,\bmu}(\sum_{i=1}^n X_i)\geq \E_{\beta,\bQ,\bmu^*(M)}(\sum_{i=1}^n X_i)&\gtrsim \sum_{i=1}^n \min(\mu_i,M) \\
			&\gtrsim  s\tanh(A)\gg L_n\sqrt{n}
			\ee
			by choosing $L_n=\left(\sqrt{n}/s\tanh(A)\right)^{1/2}$ which diverges since $s\tanh(A)\gtrsim \sqrt{n}$.

			\item This follows trivially from noting that there exist a a constant $c>0$ (depending on $\lambda$) such that $\G_n$ has $\geq cn$ isolated vertices with probability converging to $1$ (see e.g. \cite[Chapter 3]{frieze2015introduction}). Consequently, when the $s$ signal magentizations are on those isolated vertices, the problem is as hard as detecting sparse signals for independent binary outcomes -- a basic case considered in \cite{mukherjee2015hypothesis}. The result therefore follows for any $\beta$.

		\end{enumerate}

		\subsection{Proof of Proposition \ref{prop:lattice_suff}}
	Following the proof of \cite[Theorem 6]{mukherjee2016global} we note that it is enough to show that
		
		\be
		\liminf_{n \to \infty}\P^+_{\beta,\bQ,\mathbf{0}} (  \sum_{i\in \Lambda_{n,d} }^n (X_i - \E^+_{\beta,\bQ,\mathbf{0}}(X_i) ) >x\sqrt{n} ) > 0,  \label{eq:null_large_plus}\\
		\limsup_{n \to \infty} \sup_{\bmu\in\tilde\Xi(A,s)}  \P^+_{\beta,\bQ,\bmu} (  \sum_{i\in \Lambda_{n,d} }(X_i -  \E^+_{\beta,\bQ,\mathbf{0}}(X_i) ) ) >x_n \sqrt{n} ) = 0,  \label{eq:alt_small_plus}
		\ee
		
		with $\tilde{\Xi}(s,A):=\left\{\bmu\in \mathbb{R}^{n}:|{\rm support}(\bmu)|= s,{\rm \ and\ } \min_{i\in {\rm support}(\bmu)}\mu_i= A>0\right\}$. \\

		Once again to show \eqref{eq:alt_small_plus} it is enough to show that for some constant $C>0$
		\be
		|\E^+_{\beta,\bQ,\bmu}(\sum_{i\in \Lambda_{n,d}}X_i)-\E^+_{\beta,\bQ,\mathbf{0}}(\sum_{i\in \Lambda_{n,d}}X_i)|&\leq Cs\tanh(A),\label{eqn:non_centrality_plus}\\
		\mathrm{Var}^+_{\beta,\bQ,\bmu}(\sum_{i\in \Lambda_{n,d} }X_i)&\leq Cn \label{eqn:variance_plus}
		\ee
		The proof of \eqref{eqn:non_centrality_plus} and \eqref{eqn:variance_plus} follows verbatim as the proof of \eqref{eqn:non_centrality} and \eqref{eqn:variance} in Theorem \ref{theorem:arbitrarysparse_general} -- since both GHS and GKS inequality continue to hold here along with the exponential correlation decay stated in statement of the proposition.
		
		The proof of \eqref{eqn:non_centrality_plus} is a by product of Central Limit Theorem under $H_0$. Indeed, the fact that exponential correlation decay $\mathrm{Cov}^{+}_{\beta,\bQ,\mathbf{0}}\left(X_i,X_j\right)\leq \exp\left(-c_{\beta,d}\|i-j\|_1\right)$ implies that $\sum_{i\in \Lambda_{n,d}} (X_i-\E_{\beta,\bQ,\mathbf{0}}^+(X_i))$ is asymptotically normal follows from arguments in \cite[Theorem 5]{martin1973mixing}.

		\subsection{Proof of Theorem \ref{thm:correlation_decay}}
		We recall the definition $\beta_c(d)$ from Section \ref{section:technical_backgroud} and divide our analysis in two parts based on the position of $\beta$ relative to $\beta_c(d)$. 
		
		\subsubsection{High temperature $(0<\beta<\beta_c(d))$} 
		
		First note that by GKS inequality (Lemma \ref{lemma:GKS}) one has 
		\be
		\mathrm{Cov}_{\beta,\bQ,\mathbf{0}}\left(X_i,X_j\right)&\leq \E_{\beta,\bQ,\mathbf{0}}(X_iX_j).
		\ee
		Subsequently, by Lemma \ref{lemma:random_cluster_model} part 3, one has for $p=p(\beta)=1-e^{-2\beta}$ and $\xi=\{\{l\}:l\in \partial G_n\}$
		\be
		\E_{\beta,\bQ,\mathbf{0}}(X_iX_j)&=\phi_{p,\G_n}^{\xi}(i\stackrel{\G_n}{\leftrightarrow} j).
		\ee
		Now let $\G_n'$ be the subgraph of $\mathbb{G}$ obtained by considering the ball (in graph distance induced by $\mathbb{G}$) of radius $n^{1/d}$ around $\mathbf{0}$. Then it is clear that $\G_n$ is subgraph of $\G_n'$. Let $\xi'=\{\{l\}:l\in \partial \G_n'\}$. However, one has for $i,j\in \Lambda_{n,d}$
		\be
		\phi_{p,\G_n}^{\xi}(i\stackrel{\G_n}{\leftrightarrow} j)&\leq \phi_{p,\G_n'}^{\xi'}(i\stackrel{\G_n'}{\leftrightarrow} j)\leq \phi_{p,\G_n'}^{\partial \G_n'}(i\stackrel{\G_n'}{\leftrightarrow} j).
		\ee
		Above the first inequality follows from Lemma \ref{lemma:random_cluster_model} part 2 followed by part 1 (the probability of $i$ is connected to $j$ in the larger $\G_n'$ is average over $i$ and $j$ conencted in the smaller graph $\G_n$ with random boundary conditions each of which dominates the boundary condition $\xi$), and then the second inequality uses Lemma \ref{lemma:random_cluster_model} part 1 again. Now let $R_n(i,j)$ be the ball of radius $d(i,j)$ around $i$ where $d(i,j)$ is the graph distance between vertices $i$ and $j$ (in this case simply equals $\|i-j\|_1$ for the nearest neighbor Ising model on lattices). Consequently $j\in \partial R_n(i,j)\subseteq G_n'$ and by Lemma \ref{lemma:random_cluster_model} part 2 followed by part 1
		\be
		\phi_{p,\G_n'}^{\partial \G_n'}(i\stackrel{\G_n'}{\leftrightarrow} j)&\leq \phi_{p,R_n(i,j)}^{\partial R_n(i,j)}(i\stackrel{R_n(i,j)}{\leftrightarrow} j)\leq \phi_{p,R_n(i,j)}^{\partial R_n(i,j)}(i\stackrel{R_n(i,j)}{\leftrightarrow} \partial R_n(i,j)).
		\ee  
		Finally, by \cite[Theorem 1.2]{duminil2017sharp}, there exists $\beta_c(d)>0$ such that for all $\beta<\beta_c(d)$
		\be
		\phi_{p,R_n(i,j)}^{\partial R_n(i,j)}(i\stackrel{R_n(i,j)}{\leftrightarrow} \partial R_n(i,j))&\leq \exp\left(-c_{\beta}d(i,j)\right)
		\ee
		for some $c_{\beta}>0$ depending on $\beta$. This completes the proof for the high temperature regime.

		
		\subsubsection{Low temperature $(\beta>\beta_c(d))$}
		We will use the main result of \cite{duminil2018exponential}. Observe that in \cite{duminil2018exponential}, truncated correlation decay was proved in an infinite volume setting, hence we need a bit of work to use that in the finite volume case.

		We begin with a few notation and known results and we cite \cite{duminil2018exponential} for all the relevant references. Given $p \in (0,1)$, let $\phi_{p}$ be the unique infinite volume measure obtained as the limit of a sequence of FK-Ising measures on a sequence of finite graphs which exhaust $\mathbb Z^d$. This measure is unique in the sense that the limit does not depend on the sequence taken, nor on the boundary conditions imposed on each of them. For any $j\in \mathbb{Z}^d$ we let $\Lambda_k(j)$ to denote the set $\{l\in\mathbb{Z}^d: \|l-j\|_1\leq k\}$ and call $\Lambda_k = \Lambda_k(\mathbf{0})$. A \emph{box of size $k$} for some $k \in \mathbb N$ is a set of the form $\Lambda_k(j) $ for some $k \ge 1, j \in k\mathbb Z^d$. For $S \subset \mathbb Z^d$, we call $\B_k(S)$ the set of boxes of size $k$ which is also a subset of $S$. We write $\B_k = \B_k(\mathbb Z^d)$.  Let $\phi_{p,\Lambda_k}^{1}$ and $\phi_{p,\Lambda_k}^{0}$ denote $\phi_{p,\Lambda_k}^{\xi}$ for $\xi=\{\partial\Lambda_k\}$ (wired boundary condition) and $\xi=\{\{l\}:l\in \Lambda_k\}$ (free boundary condition) respectively. Finally we drop the dependence on $d$ in the notation $\Lambda_{n,d}$ and simply refer to it as $\Lambda_n$ in the rest of the proof.
				
		In the above notation (when $\bQ=\bQ(\Lambda_{n,d})=\bQ(\Lambda_n)$ with $\bQ_{ij}=\mathbf{1}(\|i-j\|_1=1)$), we have by the Edward-Sokal coupling (part 3 of Lemma \ref{lemma:random_cluster_model})
		\be 
		\mathrm{Cov}^+_{\beta,\bQ,\mathbf{0}}\left(X_i,X_j\right)&=\phi_{p,\Lambda_n}^{1}(i \leftrightarrow j)-\phi_{p,\Lambda_n}^{1}(i \leftrightarrow \partial\Lambda_n)\phi_{p,\Lambda_n}^{1}(j \leftrightarrow \partial\Lambda_n). \label{eq:cov+}
		\ee 
			We point out although this quantity will turn out to be small as the distance between $i$ and $j$ becomes large, $\E^+_{\beta,\bQ,\mathbf{0}}(X_iX_j)$ just by itself is not small. Indeed, low temperature in the Ising corresponds to the supercritical phase of the random cluster model and hence $\E^+_{\beta,\bQ,\mathbf{0}}(X_iX_j)$ has a uniform lower bound irrespective of the distance between $i$ and $j$. For $d=2$, \eqref{eq:cov+} is an easy consequence of Kramer-Wannier's duality, whose consequence is that the planar dual configuration of the supercritical wired random-cluster measure is the free subcritical random-cluster measure. Therefore, we focus on the case  $d \ge 3$.

		
		For $k=(k_1,\ldots,k_d),j=(j_1,\ldots,j_d) \in \Z^d$, a generalized box $\Lambda_{k} (j) = \{(l_1,\ldots,l_d) \in \Z^d : |l_t  - j_t| \le k_t, t=1,\ldots,d\}$. A generalized box of order $k$ is such that all $k_i \in [k,2k]$.
		Following \cite{duminil2018exponential}, given an $\omega \in \{0,1\}^{E(\Lambda_n)}$, we call a generalized box $S_k$ of order $k$ \emph{good}  if 
		\begin{itemize}
			\item there exists a cluster in $\omega|_{S_{k}}$ touching all the $2d$ sides,
			\item every open path of length at least $k/2$ intersects this cluster.
		\end{itemize}
		In the first item above, $\omega|_{S_{k}}$ denotes the restriction of $\omega$ to $S_k$.
		It follows from a combination of results of \cite{pisz96,B05} that for $p>p_c(d)$.
		\begin{equation}
		\sup_\xi \phi^\xi_{p,S_{k}} (S_k \text{ is good }) \ge 1-e^{-ck}. \label{eq:block_good}
		\end{equation}
		(For ease of reference, let us point out that in \cite{pisz96}, the above result is the statement of (3.7) (with $g(n) = n/2$ there) but with $p>\hat p_1$ and $\alpha =1$ where $\hat p_1$ defined in (3.5) there and it is proved in \cite{B05} that $\hat p_1 = p_c$.) The inequality \eqref{eq:block_good} immediately implies the following useful lemma.
		
			\begin{lemma}\label{lem:going_in}
			Assume that $2\|j - \partial \Lambda_n\|_1 < k$. Then there exists a $c>0$ such that for all $k \ge 1, n \ge 1$,
			$$
			\phi^1_{p,\Lambda_n} (j \leftrightarrow \partial \Lambda_k (j), j \not \leftrightarrow \partial \Lambda_n \text{ inside } \Lambda_k (j)) \le e^{-ck}
			$$
		\end{lemma}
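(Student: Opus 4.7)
The plan is to reduce the event in the lemma to the complement of a good-block event and then invoke \eqref{eq:block_good}. The key feature exploited is that the hypothesis $2\|j-\partial\Lambda_n\|_1 < k$ places part of $\partial\Lambda_n$ well inside $\Lambda_k(j)$, so any open path of length at least $k$ emanating from $j$ and confined to $\Lambda_k(j)$ necessarily traverses a substantial portion of $\Lambda_n$ abutting $\partial\Lambda_n$; a spanning cluster in that portion will then force the path to connect to $\partial\Lambda_n$.

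Concretely, let $l = \|j-\partial\Lambda_n\|_1 < k/2$, let $\hat j \in \partial\Lambda_n$ achieve this distance, and write $e_1$ for the axis direction pointing from $\hat j$ to $j$. Set $\tilde j = j + (k/2 - l)\,e_1$ and take $S = \Lambda_{k/2}(\tilde j)$, the cube of order $k/2$ centered at $\tilde j$. By construction, (i) $j \in S$, (ii) $S \subseteq \Lambda_k(j)$, and (iii) the face of $S$ perpendicular to $e_1$ on the side of $\hat j$ lies on $\partial\Lambda_n$ and contains $\hat j$, while every other face of $S$ is at $\ell_1$-distance at least $k/2$ from $j$. The domain Markov property together with the comparison of boundary conditions (Lemma \ref{lemma:random_cluster_model}) implies that the conditional law of $\omega|_S$ given the configuration outside is an FK measure on $S$ with some random boundary condition, and \eqref{eq:block_good} applied uniformly in that boundary condition yields $\phi^1_{p,\Lambda_n}(S \text{ is good}) \ge 1 - e^{-c'k}$ for some $c'>0$.

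On $\{S \text{ is good}\}$ there is a spanning cluster $\mathcal{C}$ in $\omega|_S$ that touches all $2d$ faces of $S$, including the face lying on $\partial\Lambda_n$; hence $\mathcal{C} \leftrightarrow \partial\Lambda_n$ inside $\Lambda_k(j)$. Suppose additionally the event in the lemma occurs, and let $P$ be an open path from $j$ to $\partial\Lambda_k(j)$ of length at least $k$ lying inside $\Lambda_k(j)$. Since $S$ does not reach $\partial\Lambda_k(j)$, the path $P$ must exit $S$. If $P$ exits through the face on $\partial\Lambda_n$, then $P$ itself reaches $\partial\Lambda_n$ inside $\Lambda_k(j)$, contradicting the event. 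Otherwise $P$ exits through a face at $\ell_1$-distance at least $k/2$ from $j$, so the portion of $P$ inside $S$ has length at least $k/2$, which exceeds $(k/2)/2$; the good-block property then forces this portion to intersect $\mathcal{C}$, yielding $j \leftrightarrow \partial\Lambda_n$ inside $\Lambda_k(j)$, again a contradiction. Thus the event in the lemma is contained in $\{S \text{ is not good}\}$, whose probability is at most $e^{-c'k}$.

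The chief obstacle I foresee is the case where $j$ lies close to a corner of $\Lambda_n$, so that several faces of $\partial\Lambda_n$ are within $\ell_1$-distance $k/2$ of $j$ and the cube $S$ defined above fails to be contained in $\Lambda_n$. To circumvent this, I would replace $S$ with a generalized (rectangular) box whose half-side-lengths all lie in a fixed constant-ratio window such as $[k/8, k/2]$, chosen so that each nearby face of $\partial\Lambda_n$ coincides with a face of $S$ while the remaining faces of $S$ stay at distance of order $k$ from $j$. The good-block estimate \eqref{eq:block_good} from \cite{pisz96,B05} remains valid for generalized boxes of bounded aspect ratio, so the preceding geometric dichotomy applies with unchanged constants up to factors depending only on $d$.
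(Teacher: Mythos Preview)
Your proposal is correct and follows essentially the same approach as the paper: reduce the event to the failure of a good-block property for a (generalized) box that has at least one face on $\partial\Lambda_n$, then invoke \eqref{eq:block_good}. The paper's proof is terser, taking the intersection $\Lambda_k(j)\cap\Lambda_n$ itself as the box to test and noting that any path from $j$ to $\partial\Lambda_k(j)$ has length at least $k$; your explicit construction of a sub-box $S$ and your handling of the corner case via a generalized box of bounded aspect ratio make the geometry more transparent but do not differ in substance.
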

		\begin{proof}
			Notice that at least one of the boundaries of $\Lambda_{k}(j) \cap \Lambda_n$ must have all the vertices belonging to $\partial \Lambda_n$ by assumption. Hence the event in question can only happen if the box $\Lambda_k \cap \Lambda_n$ is not good (since a path joining $j$ to the boundary of $\Lambda_k(j)$ has length at least $k$ and hence it must belong to the cluster which connects all the $2d$ boundary faces) which has probability at most $e^{-ck}$ using \eqref{eq:block_good}.
		\end{proof}

		 We also introduce the natural partial order for configurations in $\{0,1\}^{E(\Lambda_n)}$ by writing $\omega^1 \le \omega^2$ if $\omega^1_e \le \omega^2_e$ for each edge $e \in E(\Lambda_n)$. Throughout the rest of this section, we fix $p > p_c(d)$.
		\begin{lemma}[Exponential mixing]\label{lem:coupling_finite}
			There exists a $k_0 \ge 1, c>0$ such that for all $n \ge 1, k \ge 10 k_0$ the following holds. Let $S_1 ,S_2$ be a unions of boxes of size $k_0$ in $\Lambda_n$ with $\|S_1 - S_2\|_1 > k$. Let $\phi^{1}_{p,\Lambda_n, S_1}$ be the random cluster measure in $\Lambda_n \setminus S_1$ with wired boundary condition and $\phi^{\xi}_{p,\Lambda_n, S_1}$ be the same but with boundary condition $\xi$ on $\partial S_1$ and wired on $\partial \Lambda_n$. Then there exists a coupling $\Phi^{1,\xi}_{p,\Lambda_n} $  on pairs of configurations $(\omega_1,\omega_\xi)$ with $\phi^{1}_{p,\Lambda_n, S_1}$ being the first marginal and $\phi^{\xi}_{p,\Lambda_n, S_1}$ being the second marginal such that $\omega^1 \ge \omega^\xi$ and
			$$
			\Phi^{1,\xi}_{p,\Lambda_n}(\omega_1 |_{S_2}  = \omega_\xi|_{S_2} ) \ge1- e^{ - ck}
			$$
		\end{lemma}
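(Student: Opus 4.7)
The plan is to combine stochastic monotonicity of the random-cluster measures with a shielding construction built out of good blocks, using the uniform estimate \eqref{eq:block_good}.

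\textbf{Step 1 (Monotone coupling and shielding slab).} I would begin by producing a monotone coupling. By the comparison of boundary conditions in Lemma~\ref{lemma:random_cluster_model}(1), the wired condition on $\partial S_1$ stochastically dominates any boundary condition $\xi$, so Strassen's theorem supplies a monotone coupling $\Phi^{1,\xi}_{p,\Lambda_n}$ with $\omega_1 \ge \omega_\xi$ edge-wise. With $k_0$ a large constant (depending only on $p,d$) to be tuned, I would tile the annular region $R := \{x \in \Lambda_n : k/3 \le \mathrm{dist}(x,S_1) \le 2k/3\}$ by generalized boxes of order $k_0$; the hypothesis $\|S_1-S_2\|_1>k$ places $R$ disjoint from $S_2$, so $R$ separates $\Lambda_n\setminus S_1$ into a $T$-component containing $S_2$ and a $U$-component containing $\partial S_1$. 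Let $\mathcal{G}$ be the event that within this tiling there is a connected codimension-one ``slab'' of boxes, all good under $\omega_\xi$, separating $U$ from $T$. Standard slab constructions from supercritical percolation (cf.\ \cite{pisz96}) show that on $\mathcal{G}$ the spanning clusters of the slab boxes merge into a single cluster $\mathcal{C}_\xi\subset R$ which touches every vertex of $\partial T\cap\partial R$.

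\textbf{Step 2 (Exact coupling on $\mathcal{G}$).} Since $\omega_1\ge\omega_\xi$, on $\mathcal{G}$ the enlarged cluster $\mathcal{C}_1\supseteq\mathcal{C}_\xi$ of $\omega_1|_R$ also touches every vertex of $\partial T\cap\partial R$. Exposing $\omega|_{U\cup R}$ under the coupling and applying the Domain Markov property (Lemma~\ref{lemma:random_cluster_model}(2)), the conditional law of $\omega|_T$ on $T$ is a random-cluster measure on $T$ whose boundary condition on $\partial T$ is determined by the $R$-cluster partition restricted to $\partial T$. On $\mathcal{G}$ this induced partition is the single-class (wired) partition for both $\omega_\xi$ and $\omega_1$, so the two conditional distributions on $T$ coincide. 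I would then augment the monotone coupling on $T^c$ with the identity coupling on $T$, giving $\omega_1|_T = \omega_\xi|_T$ and in particular $\omega_1|_{S_2}=\omega_\xi|_{S_2}$ on $\mathcal{G}$.

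\textbf{Step 3 (Bounding $\mathcal{G}^c$; principal obstacle).} By \eqref{eq:block_good}, each scale-$k_0$ box is bad with probability at most $e^{-ck_0}$, uniformly over boundary conditions. Since goodness depends only on $\omega$ inside the box, the bad-box process is finite-range dependent, and for $k_0$ large the Liggett--Schonmann--Stacey type stochastic-comparison theorem dominates it by a subcritical Bernoulli site percolation on the block lattice $k_0\mathbb Z^d$. A Peierls bound then shows that the probability of a bad-box crossing across $R$ (the dual obstruction to the existence of a good slab) is at most $e^{-c'k}$, yielding $\Phi^{1,\xi}_{p,\Lambda_n}(\mathcal{G}^c)\le e^{-c'k}$. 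The main obstacle is Step~1: adjacent good boxes must be verified to merge their spanning clusters into a single cluster that touches all of $\partial T\cap\partial R$. Property (ii) of good boxes --- that every open path of length $\ge k_0/2$ meets the spanning cluster --- is precisely what closes this gap, but orchestrating it across corners and faces of the slab, and near $\partial\Lambda_n$ where boxes may be truncated, together with verifying the finite-range dependence needed for the Liggett--Schonmann--Stacey step, is where most of the technical work will lie.
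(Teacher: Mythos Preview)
Your Step~2 contains a genuine gap that is not the obstacle you flagged. You claim that on $\mathcal{G}$ the merged spanning cluster $\mathcal{C}_\xi$ ``touches every vertex of $\partial T\cap\partial R$'', so that the induced boundary partition on $\partial T$ is the single wired class. But the good--box property only guarantees that the spanning cluster \emph{intersects} each face of the box, not that it contains every vertex of any given face; there is no reason for every site of $\partial T$ to lie in $\mathcal{C}_\xi$. For the Domain Markov step you actually need the two induced partitions on $\partial T$ (under $\omega_1|_{R\cup U}$ and $\omega_\xi|_{R\cup U}$) to \emph{coincide}, not to be wired. Since you only have $\omega_1\ge\omega_\xi$, there can easily be a pair $x,y\in\partial T$ joined by a short open path in $\omega_1|_R$ (say of length $<k_0/2$, so property~(ii) says nothing) that is absent in $\omega_\xi$; the partitions then differ and the conditional laws on $T$ do not match. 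The static monotone coupling plus a good--in--$\omega_\xi$ slab simply does not force agreement of the two configurations near $\partial T$.

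The paper closes exactly this gap by a different mechanism. Instead of a static slab, it runs a dynamic block--by--block exploration and works with \emph{very good} blocks, meaning good \emph{and} $\omega_1|_B=\omega_\xi|_B$. The probability that a freshly explored block is very good is controlled not by \eqref{eq:block_good} alone but by the additional input \eqref{eq:block_close} (Proposition~1.4 of \cite{duminil2018exponential}), which bounds the total edge--marginal discrepancy between wired and free FK measures on a block; under the monotone coupling this upper--bounds the chance of any disagreement edge in $B$. Once the explored region is sealed off by very good blocks the two boundary conditions on the unexplored part are literally equal, so the identity coupling is legitimate. Your outline never invokes \eqref{eq:block_close}, and without it there is no control on $\{\omega_1\neq\omega_\xi\}$ at the block scale; the Pisztora--type slab argument and Liggett--Schonmann--Stacey domination you describe would handle uniqueness--of--large--cluster statements but do not by themselves yield the exact coupling on $S_2$.
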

		\begin{proof}
			The proof is the same as Theorem 1.3 of \cite{duminil2018exponential} but unfortunately does not follow from the statement, so we recreate the proof here. Proposition 1.4 of \cite{duminil2018exponential} states that for all $\varepsilon>0$, we can choose a $k$ large enough so that 
			\begin{equation}
			\sum_{e \in E(\Lambda_k)} (\phi^1_{p,\Lambda_{2k}} (\omega_e =1) -   \phi^0_{p,\Lambda_{2k}}(\omega_e =1) ) \le |\Lambda_k| e^{-c(\log k)^{1+c}} \le \varepsilon. \label{eq:block_close}
			\end{equation}
			Indeed, we can make such a choice as the exponential term beats any polynomial in $k$. We choose a $k_0(\varepsilon)$ satisfying the above and fix it and for simplicity assume that $k_0 $ divides $n$.
			
			The idea is to sample both the measures in $\Lambda_n \setminus S_1$ block by block until we end up with the same boundary condition for both measures in the complement of the blocks sampled so far. To that end, we say that a sampled block $B$ is \emph{very good} if $\omega^\xi|_{B} = \omega^1|_{B} $ and it is good in $\omega^\xi|_{B}$ as defined in \eqref{eq:block_good}. There is a set of boundary blocks for the exploration, called $B_t$, with the following properties. The set $B_0$ is simply the set of blocks in $\B_k$ which intersect the boundary of $\Lambda_n \setminus  S_1$. At every step, we pick a block $D$ in $B_t$ and sample $\omega^\xi \le \omega^1$ given the boundary conditions (the partial order in the samples can be maintained using monotonicity of the measures w.r.t. the boundary conditions (See item (i) of Lemma \ref{lemma:random_cluster_model}). If this sampled block turns out to be very good, we remove it from the set of boundary blocks. If not, we declare the block to be sampled and add the blocks in $D \cap \B_k(\Lambda_n)$ which is not in the set of sampled blocks to $B_t$, and define it to be $B_{t+1}$. We see that if $B_t  = \emptyset$, then the rest of the unsampled domain is only surrounded by very good blocks. It is easy to see that this means that the boundary conditions for $\omega^1$ and $\omega^\xi$ coincide in the rest of the domain, and hence both the samples can be taken to be equal. This defines our required coupling $\Phi^{\xi,1}_{p,\Lambda_n}$. Note that the partial order $\omega^\xi \le \omega^1$ is maintained via this coupling.
			
		Observe that \eqref{eq:block_close}, conditioned on sampling the blocks $B_0,B_1,\ldots, B_t$, the probability that $B_{t+1}$ is not very good is at most $\varepsilon$ uniformly over the conditioning.	 Also note that in this coupling $\{\omega^1 |_{S_2}  = \omega^\xi|_{S_2}\}$ does not hold if and only if the blocks $B_t$ intersect $S_2$. This means there must be a set of steps $t_{1} > t_{2} >\ldots>t_s$ with $s>a\frac{k}{k_0}$ for some universal constant $a$ such that at step $t_{i+1}$ we sample a block within distance $3k$ of the block $B_{t_i}$ and is not very good.  Thus overall, this event has probability at most $(\varepsilon)^{a\frac{k}{k_0}}$ which completes the proof.
		\end{proof}
		
		\begin{corollary}\label{cor:coupling_finite}
		There exist $c,c'>0$ such that for all $k \ge 1, n\ge 1$ the following holds. Let $S_1,S_2$ be two boxes of any size inside $\Lambda_n$ with $\|S_1 - S_2\|_1 > k$. Let $A_1$ (resp. $A_2$) be an event which depends only on the states of edges in $S_1$ (resp. $S_2$). Then 
		$$
		|\phi^1_{p,\Lambda_n}(A_1 \cap A_2) -\phi^1_{p,\Lambda_n} (A_1) \phi^1_{p,\Lambda_n}(A_2)|  \le ce^{-c'k}.
		$$
		\end{corollary}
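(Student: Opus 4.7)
\textbf{Proof proposal for Corollary \ref{cor:coupling_finite}.}

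The plan is to reduce the decoupling statement to the coupling lemma (Lemma \ref{lem:coupling_finite}) via conditioning on the states of all edges in a slight thickening of $S_1$, and then to invoke the domain Markov property (Lemma \ref{lemma:random_cluster_model}, part 2). For small values of $k$ (say $k\le 12 k_0$, where $k_0$ is the block size from Lemma \ref{lem:coupling_finite}), the statement is trivial since probabilities are bounded by $1$ and we can absorb this into the constant $c$. So I will work in the regime $k>12k_0$.

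The first step is to enlarge $S_1$ to a set $\tilde S_1\supset S_1$ which is a union of boxes of size $k_0$ with $\tilde S_1\setminus S_1$ having thickness at most $k_0$; then $\|\tilde S_1 - S_2\|_1 > k - 2k_0 > 10 k_0$, so Lemma \ref{lem:coupling_finite} applies with $\tilde S_1$ in the role of $S_1$. Since $A_1$ depends only on edges in $S_1\subset \tilde S_1$, we can condition on $\omega|_{\tilde S_1}$ without destroying $A_1$:
\be
\phi^1_{p,\Lambda_n}(A_1\cap A_2) = \E^1_{p,\Lambda_n}\bigl[\mathbf{1}_{A_1}\,\phi^1_{p,\Lambda_n}(A_2\mid \omega|_{\tilde S_1})\bigr].
\ee
By the domain Markov property, the inner conditional probability equals $\phi^{\xi_\psi}_{p,\Lambda_n, \tilde S_1}(A_2)$ where $\psi=\omega|_{\tilde S_1}$ determines the boundary condition $\xi_\psi$ on $\partial\tilde S_1$ (and wired boundary is kept on $\partial\Lambda_n$).

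The second step is to show that $\phi^{\xi_\psi}_{p,\Lambda_n, \tilde S_1}(A_2)$ is essentially independent of $\psi$. Apply the coupling $\Phi^{1,\xi_\psi}_{p,\Lambda_n}$ of Lemma \ref{lem:coupling_finite} between $\phi^1_{p,\Lambda_n,\tilde S_1}$ and $\phi^{\xi_\psi}_{p,\Lambda_n,\tilde S_1}$. Since $A_2$ depends only on edges in $S_2$, and the coupling makes the two configurations agree on $S_2$ with probability at least $1-e^{-c(k-2k_0)}$, we get
\be
\bigl|\phi^{\xi_\psi}_{p,\Lambda_n,\tilde S_1}(A_2) - \phi^{1}_{p,\Lambda_n,\tilde S_1}(A_2)\bigr| \le e^{-c(k-2k_0)}
\ee
uniformly in $\psi$. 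Averaging the same inequality against $\phi^1_{p,\Lambda_n}$ gives also
\be
\bigl|\phi^{1}_{p,\Lambda_n}(A_2) - \phi^{1}_{p,\Lambda_n,\tilde S_1}(A_2)\bigr| \le e^{-c(k-2k_0)},
\ee
and combining by the triangle inequality shows $|\phi^1_{p,\Lambda_n}(A_2\mid \omega|_{\tilde S_1}) - \phi^1_{p,\Lambda_n}(A_2)|\le 2e^{-c(k-2k_0)}$ almost surely.

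The last step is to plug this into the conditioning formula: since $\phi^1_{p,\Lambda_n}(A_1)\,\phi^1_{p,\Lambda_n}(A_2) = \E^1_{p,\Lambda_n}[\mathbf{1}_{A_1}\,\phi^1_{p,\Lambda_n}(A_2)]$, we obtain
\be
\bigl|\phi^1_{p,\Lambda_n}(A_1\cap A_2) - \phi^1_{p,\Lambda_n}(A_1)\,\phi^1_{p,\Lambda_n}(A_2)\bigr| \le 2e^{-c(k-2k_0)},
\ee
and adjusting constants gives the form $ce^{-c'k}$ claimed. I do not anticipate a major obstacle here, since the real work has already been done inside Lemma \ref{lem:coupling_finite}; the only care needed is the small enlargement $S_1\rightsquigarrow \tilde S_1$ so the block-coupling lemma is literally applicable, and ensuring that $S_2$ stays disjoint from $\tilde S_1$ (which is automatic once $k>2k_0$).
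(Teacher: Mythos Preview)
Your proposal is correct and follows essentially the same route as the paper: condition on the configuration in (a $k_0$-block enlargement of) one box, invoke the domain Markov property, and then apply Lemma~\ref{lem:coupling_finite} to show the resulting boundary condition does not affect the probability of the event in the other box, up to an $e^{-ck}$ error; the small-$k$ case is absorbed into the constant. The only cosmetic differences are that the paper enlarges \emph{both} boxes to unions of $k_0$-boxes (you should also cover $S_2$ by such a union so the hypothesis of Lemma~\ref{lem:coupling_finite} is met verbatim---agreement on the enlargement implies agreement on $S_2$), and that the paper compares two arbitrary conditionings $\xi,\xi'$ directly rather than routing through the wired measure as you do; these are equivalent by the triangle inequality.
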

\begin{proof}
Suppose $k \ge 20k_0$ where $k_0$ is as in \ref{lem:coupling_finite}.
In this case, without loss of generality, suppose each of the boxes is a union of boxes of size $k_0 \ge 1$ as in Lemma \ref{lem:coupling_finite} (otherwise take a minimal union of such boxes containing them).   
It follows from the exponential mixing Lemma~\ref{lem:coupling_finite} that for any two configurations $\xi,\xi'$ of the states of edges in $S_2$ 
$$
\phi^1_{p,\Lambda_n} (A_1 | \omega|_{S_2} = \xi )  - \phi^1_{p,\Lambda_n} (A_1 | \omega|_{S_2} = \xi ') \le ce^{-c'k}. 
$$
Averaging over all possible $\xi'$, we obtain
$$
\phi^1_{p,\Lambda_n} (A_1 | \omega|_{S_2} = \xi )  - \phi^1_{p,\Lambda_n} (A_1 ) \le ce^{-c'k}. 
$$
Averaging over all $\xi$ in $A_2$, the corollary is immediate. 

If $k \le 20k_0$, we can trivially bound the left hand side by $2$ and increase $c$ to satisfy the inequality. 
\end{proof}

		We are now ready to prove Theorem~\ref{thm:correlation_decay} for the low temperature regime. We split our argument into several cases depending on where $i$ and $j$ are relative to the boundary of $\Lambda_n$.
		
		\paragraph{Case 1. $\|i - j\|_1 \le 100(\|j-\partial \Lambda_n\|_1 \wedge \|i-\partial \Lambda_n\|_1)$}
		Note that there exist $c,c'>0$ such that for all such $i,j,n$,
		\be
		\ &\phi_{p,\Lambda_n}^1(i \leftrightarrow j) \le \phi_{p,\Lambda_n}^1(i \leftrightarrow \partial \Lambda_{\|i-j\|_1/200}(i), j \leftrightarrow \partial \Lambda_{\|i-j\|_1/200}(j)  ),  \\
		\ &\phi_{p,\Lambda_n}^1(i \leftrightarrow \partial \Lambda_n) \phi_{p,\Lambda_n}^1(j\leftrightarrow \partial \Lambda_n) \ge \phi_{p}(i \leftrightarrow \partial \Lambda_n) \phi_{p}(j\leftrightarrow \partial \Lambda_n),\\
		\ & \left\vert\begin{array}{c}\phi_{p,\Lambda_n}^1(i \leftrightarrow \partial \Lambda_{\|i-j\|_1/200}(i), j \leftrightarrow \partial \Lambda_{\|i-j\|_1/200}(j)  )  \\ - \phi_{p} (i \leftrightarrow \partial \Lambda_{\|i-j\|_1/200}(i), j \leftrightarrow \partial \Lambda_{\|i-j\|_1/200}(j) )\end{array}  \right\vert \le ce^{-c'\|i-j\|_1}.
		\ee
		The first inequality above is simply inclusion.
		The second inequality above follows from monotonicity of boundary conditions (item 1 of Lemma \ref{lemma:random_cluster_model}).
		The third inequality above follows from exponential mixing of the random cluster measure proved in Corollary~\ref{cor:coupling_finite}. To elaborate more on the third inequality, let $x$ be a lattice point closest to the midpoint joining $i$ and $j$. Observe that the event in question is measurable with respect to the FK-Ising configuration inside $\Lambda_{101\|i-j\|_1/200}(x)$ and the boundary of $\Lambda_n$ is at least $\|i-j\|_1/200$ away from $\Lambda_{101\|i-j\|_1/200}(x)$, hence applying Corollary~\ref{cor:coupling_finite}, the third inequality is immediate. 
		
		\begin{figure}[h]
		\centering
		\includegraphics{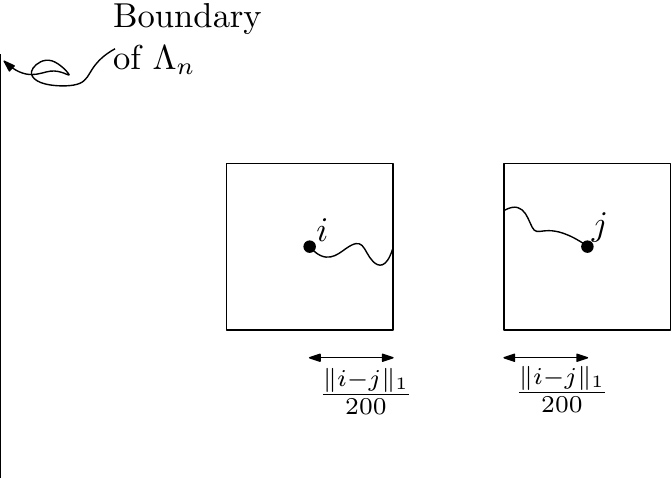}
		\caption{Case 1 (Not drawn to scale). The idea is to show that both $\phi_{p,\Lambda_n}^1(i \leftrightarrow \partial \Lambda_n) \phi_{p,\Lambda_n}^1(j\leftrightarrow \partial \Lambda_n)$ and $\phi_{p,\Lambda_n}^1(i \leftrightarrow j)$ are within  $e^{-c\|i-j\|_1}$ of the above illustrated event. By exponential mixing the above connections from $i$ and $j$ are exponentially close (in $\|i-j\|_1$) to being independent.}\label{F:lt1}
		\end{figure}

		Now it follows from eq. (1.10) in \cite{duminil2018exponential}  that 
		\be
		\left\vert\phi_{p}(i \leftrightarrow \partial \Lambda_n) \phi_{p}(j\leftrightarrow \partial \Lambda_n) - \phi_{p}(i \leftrightarrow \partial \Lambda_{\|i-j\|_1/200}(i)) \phi_{p}(j \leftrightarrow \partial \Lambda_{\|i-j\|_1/200}(j)  )\right\vert \le  ce^{-c' \|i-j\|_1} .
		\ee

		by our choice of $i,j$ in this case.
		Also from Corollary~\ref{cor:coupling_finite} (see also discussion in \cite[Section 1.4]{duminil2018exponential}), 
		\be
 \left\vert \begin{array}{c}\phi_{p} (i \leftrightarrow \partial \Lambda_{\|i-j\|_1/200}(i), j \leftrightarrow \partial \Lambda_{\|i-j\|_1/200}(j) )  \\ -  \phi_{p}(i \leftrightarrow \partial \Lambda_{\|i-j\|_1/200}(i)) \phi_{p}(j \leftrightarrow \partial \Lambda_{\|i-j\|_1/200}(j) ) \end{array}\right\vert  \le ce^{-c'\|i-j\|_1}.
		\ee
		
		Combining all of the above, we have the exponential decay of \eqref{eq:cov+}.
		\medskip
		
		\paragraph{Case 2. $\|i - j\|_1 > 100\|j-\partial \Lambda_n\|_1 $ and $j$ is closer to $\partial \Lambda_n$ than $i$} 
		\begin{figure}[h]
		\centering
		\includegraphics[scale  = 0.72]{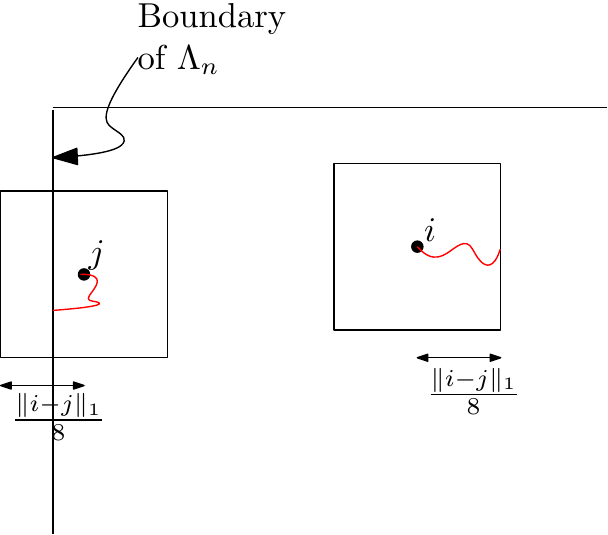}\qquad \qquad \qquad \qquad
		\includegraphics[scale = 0.9]{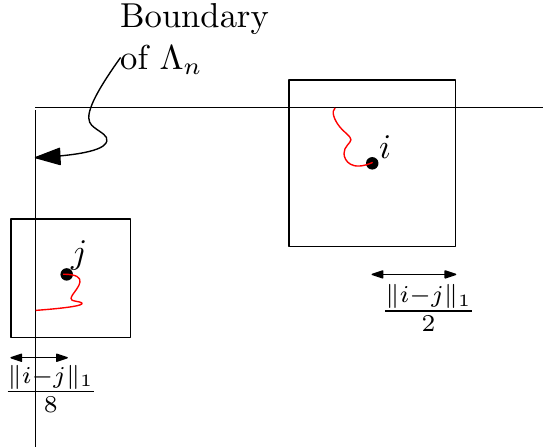}
		\caption{Case 2: (Not drawn to scale) Illustration of the events $A$ and $B$ for Case 2 and $\|i-j\|_1/4 \leq\|i - \partial \Lambda_n\|_1$ (left) and for Case 2 and $\|i-j\|_1/4 >\|i - \partial \Lambda_n\|_1$ (right)}.\label{F:lt2}
		\end{figure}

		Note that
		$$
		\phi^1_{p,\Lambda_n} (j \leftrightarrow \partial \Lambda_n, j \not \leftrightarrow \partial \Lambda_n \text{ inside } \Lambda_{\|i-j\|_1/4  }(j)) \le e^{-c\|i-j\|_1}.
		$$
		simply because if the above event occurs then $j$ must be connected to $\partial \Lambda_{\|i-j\|_1/4}(j)$ and we can employ Lemma \ref{lem:going_in} (with $k = \|i-j\|_1/4$). Therefore,
		\be
		|\phi^1_{p,\Lambda_n}(j \leftrightarrow \partial \Lambda_n) - \phi^1_{p,\Lambda_n}(j \leftrightarrow \partial \Lambda_n \text{ inside } \Lambda_{\|i-j\|_1/4}(j)) |\le e^{-c\|i-j\|_1}.
		\ee
		Now first assume that $\|i - j\|_1/4 \leq\|i - \partial \Lambda_n\|_1$ (see Figure \ref{F:lt2}). Then firstly
	\be
	\ & 	\phi^1_{p,\Lambda_n} (i \leftrightarrow j) \\
	&\le \phi^1_{p,\Lambda_n} (i \leftrightarrow \partial \Lambda_{\|i - j\|_1/8}(i) , j \leftrightarrow\partial \Lambda_{\|i-j\|_1/8}(j) ) \\
		&\le \phi^1_{p,\Lambda_n} (i \leftrightarrow \partial \Lambda_{\|i-j\|_1/8}(i) , j \leftrightarrow \partial \Lambda_{n} \text{ inside } \Lambda_{\|i-j\|_1/8 }(j) ) + e^{-c\|i-j\|_1}.
\ee
		using Lemma~\ref{lem:going_in}. Also note that
		$$
		| \phi_{p,\Lambda_n}^1 (i \leftrightarrow \partial \Lambda_n) - \phi^1_{p,\Lambda_n}(i \leftrightarrow \partial \Lambda_{\|i-j\|_1/8}(i))   | \le e^{-c\|i-j\|_1}
		$$
		We now argue why the above equation is true. Firstly, the above equation is immediate for $\phi_p$ in place of $\phi_{p,\Lambda_n}^1$ using eq.(1.10) of \cite{duminil2018exponential}. Next, we use Corollary~\ref{cor:coupling_finite} to say that $\phi_p$ and $\phi^1_{p,\Lambda_n}$ probabilities of $\{i \leftrightarrow \partial \Lambda_{\|i-j\|_1/8}(i)\}$ are exponentially close in $\|i-j\|_1$ since $\|i-j\|_1/8 < \|i-\partial \Lambda_n\|_1/2$ from our assumption.  Therefore we need to bound, 
		$$
		| \phi^1_{p,\Lambda_n} (A \cap B )   - \phi^1_{p,\Lambda_n} (A)\phi^1_{p,\Lambda_n} (B)|.
		$$
		where $A  = \{ i \leftrightarrow \partial \Lambda_{\|i-j\|_1/8}(i) \}$ and $B =  \{j \leftrightarrow \partial \Lambda_{n} \text{ inside } \Lambda_{\|i-j\|_1/8 }\}$. It now follows from Corollary \ref{cor:coupling_finite} that the above quantity is bounded by $ce^{-c'\|i-j\|_1}$ since the events $A$ and $B$ are measurable with respect to configuration inside boxes which are $c\|i-j\|_1$ apart for some $c>0$.
		
		Finally assume $\|i-j\|_1/4 >\|i - \partial \Lambda_n\|_1$((see Figure \ref{F:lt2})). In this case, again using Lemma \ref{lem:going_in} (with $k  = \|i-j\|_1/2$ for $i$ and $k = \|i-j\|_1/8$ for $j$), we see that
	\be
		\ & \phi^1_{p,\Lambda_n} (i \leftrightarrow j) \\
		&\le \phi^1_{p,\Lambda_n} (i \leftrightarrow \partial \Lambda_{n} \text{ inside } \Lambda_{\|i-j\|_1/2 }(i) , j \leftrightarrow \partial \Lambda_{n} \text{ inside } \Lambda_{\|i-j\|_1/8 }(j) ) + e^{-c\|i-j\|_1}
		\ee
		and also 
		\be
		&|\phi^1_{p,\Lambda_n} (i \leftrightarrow \partial \Lambda_n ) - \phi^1_{p,\Lambda_n} (i \leftrightarrow \partial \Lambda_{n} \text{ inside } \Lambda_{\|i-j\|_1/2 }(i)) |  \le e^{-c\|i-j\|_1}\\
		&| \phi^1_{p,\Lambda_n} (j \leftrightarrow \partial \Lambda_n ) - \phi^1_{p,\Lambda_n} ( j \leftrightarrow \partial \Lambda_{n} \text{ inside } \Lambda_{\|i-j\|_1/8 }(j))|  \le e^{-c\|i-j\|_1}.
		\ee
		
		Therefore we need to bound, 
		$$
		| \phi^1_{p,\Lambda_n} (A \cap B )   - \phi^1_{p,\Lambda_n} (A)\phi^1_{p,\Lambda_n} (B)|.
		$$
		where $A  = \{  i \leftrightarrow \partial \Lambda_{n} \text{ inside } \Lambda_{\|i-j\|_1/2 }(i)) \}$ and $B =  \{j \leftrightarrow \partial \Lambda_{n} \text{ inside } \Lambda_{\|i-j\|_1/8 }(j))\}$. It now follows from Corollary~\ref{cor:coupling_finite} that the above quantity is bounded by $ce^{-c'\|i-j\|_1}$ as again $A$ and $B$ are measurable with respect to the configuration inside boxes which are $c\|i-j\|_1$ apart. This completes the proof.

		\subsection{Proof of Theorem \ref{theorem:estimation}}
		We first prove that under the conditions of the theorem, for any fixed $\beta^{*}$
		\be 
		\sup_{(\beta^{*},h)\in \mathbb{R}^+\times \mathbb{R}}\E_{\beta,\bQ,\bmu(h)}\left(\hat{h}-h\right)^2\geq \frac{c}{\mathrm{Var}_{\beta,\bQ,\bmu(0)}\left(\sum_{i=1}^n X_i\right)}.
		\ee
		
		To this we note by Le-Cam's two point argument it is enough to consider the hypothesis testing problem \eqref{eqn:hypo_sparse} with $\beta=\beta^*$, $s=n$, and $A=t/\sqrt{\mathrm{Var}_{\beta^*,\bQ,\bmu(0)}\left(\sum_{i=1}^n X_i\right)}$ and show that there exists a $t_0$ such that for $t<t_0$, then no tests are asymptotically powerful.
		
		A standard second moment approach shows that it is enough to control the $$\E_{\beta^*,\bQ,\bmu(0)}\left(L^2\right) \quad \text{with} \quad L=\frac{Z(\beta^*,\bQ,\bmu(0))}{Z(\beta^*,\bQ,\bmu(A))}\exp\left(A\sum_{i=1}^n X_i\right).$$
		By a direct calculation we have by a two term Taylor expansion and thereafter using the properties of exponential family
		\be
		\ & \E_{\beta^*,\bQ,\bmu(0)}\left(L^2\right)\\
		&=\frac{Z(\beta^*,\bQ,\bmu(0))Z(\beta^*,\bQ,\bmu(2A))}{Z^2(\beta^*,\bQ,\bmu(A))}\\
		&=\exp\left(2A^2\mathrm{Var}_{\beta^*,\bQ,\bmu(\eta_1)}\left(\sum_{i=1}^n X_i\right)-A^2\mathrm{Var}_{\beta^*,\bQ,\bmu(\eta_2)}\left(\sum_{i=1}^n X_i\right)\right)\\
		&\leq \exp\left(3A^2\mathrm{Var}_{\beta^*,\bQ,\bmu(0)}\left(\sum_{i=1}^n X_i\right)\right)\leq \exp\left(3t^2\right)
		\ee
		Above the third last line follows with $\eta_1\in [0,2A]$ and $\eta_2\in [0,A]$, the second last line uses the GHS inequality (Lemma \ref{lemma:GHS}), and the last line uses the definition of $A$. Consequently, $\E_{\beta^*,\bQ,\bmu(0)}\left(L^2\right)$ can be made less than $1+\epsilon$ for any $\epsilon>0$ by choosing $t<\sqrt{\log(1+\epsilon)/3}$ and the proof follows.

		We now prove that under the conditions of the theorem, for any fixed $\beta^{*}>0$ and take $t^*$ small enough such that $\frac{t^*}{\sqrt{\sum_{ij}\bQ_{ij}^2}}<\delta<\beta^*$
		\be 
		\sup_{\beta\in (\beta^{*}-\delta,\beta^*+\delta) }\E_{\beta,\bQ,\bmu(0)}\left(\hat{\beta}-\beta\right)^2\geq \frac{c}{\sum_{ij}\bQ_{ij}^2}.
		\ee
		
		Once again by Le-Cam's two point argument it is enough to consider the hypothesis testing problem $H_0:\beta=\beta^*$ and $H_1:\beta=\beta(t):=\beta^*+t/\sqrt{\sum_{ij}\bQ_{ij}^2}$ (with $t<t^*$) \eqref{eqn:hypo_sparse} and show that there exists a $t_0<t^*$ such that for $t<t_0$, then no tests are asymptotically powerful. 
		
		Once again standard second moment approach shows that it is enough to control the $$\E_{\beta^*,\bQ,\bmu(0)}\left(L^2\right) \quad \text{with} \quad L=\frac{Z(\beta^*,\bQ,\bmu(0))}{Z(\beta(t),\bQ,\bmu(0))}\exp\left((\beta(t)-\beta^*)\frac{\bX^T\bQ\bX}{2}\right).$$
		Letting $\beta(t)=\beta^*+t/\sqrt{\sum_{ij}\bQ_{ij}^2}$ and $r_t=\beta(t)-\beta^*$, by a direct calculation we have by a two term Taylor expansion and thereafter using the properties of exponential family
		\be
		\ & \E_{\beta^*,\bQ,\bmu(0)}\left(L^2\right)\\
		&=\frac{Z(\beta(0),\bQ,\bmu(0))Z(\beta(2t),\bQ,\bmu(0))}{Z^2(\beta(t),\bQ,\bmu(0))}\\
		&=\exp\left(2r_t^2\frac{\partial^2}{\partial\beta^2}\log Z(\beta,\bQ,\bmu(0))_{\beta=\beta_1}-r_t^2\frac{\partial^2}{\partial\beta^2}\log Z(\beta,\bQ,\bmu(0))_{\beta=\beta_2}\right)
		\ee
		where $\beta_1\in [\beta(0),\beta(2t)]$ and $\beta_2\in [\beta(0),\beta(t)]$. Now note that
		\be 
		\frac{\partial^2}{\partial\beta^2}\log Z(\beta,\bQ,\bmu(0))&=\mathrm{Var}_{\beta,\bQ,\bmu(0)}\left(\frac{\bX^T\bQ\bX}{2}\right).
		\ee
Hence
		\be 
		\E_{\beta^*,\bQ,\bmu(0)}\left(L^2\right)&\leq \exp\left(2r_t^2\mathrm{Var}_{\beta_1,\bQ,\bmu(0)}\left(\frac{\bX^T\bQ\bX}{2}\right)\right).
		\ee
		Now, since $\beta_1\leq \beta^*+\delta<(1-\rho)/\|\bQ\|_{\infty\rightarrow \infty}$ we have by \cite[Theorem 2.1]{gheissari2018concentration} that 
		\be
		\mathrm{Var}_{\beta_1,\bQ,\bmu(0)}\left(\frac{\bX^T\bQ\bX}{2}\right)&\leq C\sum_{ij}\bQ_{ij}^2,
		\ee
		for some constant $C>0$ depending on $\beta^*,\delta,\rho$. Consequently, using the definition of $r_t^2$ we have
		\be 
		\E_{\beta^*,\bQ,\bmu(0)}\left(L^2\right)&\leq \exp\left(2Ct^2\right),
		\ee
		and the proof follows as before.



		\bibliographystyle{imsart-nameyear}
		\bibliography{biblio_ising_lattices}

	\end{document}